\providecommand{\U}[1]{\protect\rule{.1in}{.1in}}
\newtheorem{theorem}{Theorem}
\newtheorem{lemma}[theorem]{Lemma}
\newtheorem{proposition}[theorem]{Proposition}
\newenvironment{proof}[1][Proof]{\noindent\textbf{#1.} }{\ \rule{0.5em}{0.5em}}
\def\myblacksquare{\rule{1.2ex}{1.2ex}}
\begin{document}

$\ $

\vspace{2.cm}

\begin{center}
{\Large \textbf{Extremal Cylinder Configurations I: \\ \vskip .2cm Configuration $C_{\mathfrak{m}}$}}

\vspace{.4cm} {\large \textbf{Oleg Ogievetsky$^{\diamond\,\ast}$\footnote{Also
at Lebedev Institute, Moscow, Russia.} and Senya Shlosman$^{^{\diamond}
\,\dag\,\ddagger}$}}

\vskip .3cm $^{\diamond}$Aix Marseille Universit\'{e}, Universit\'{e} de
Toulon, CNRS, \\ CPT UMR 7332, 13288, Marseille, France

\vskip .05cm $^{\dag}$Inst. of the Information Transmission Problems, RAS,
Moscow, Russia

\vskip .05cm $^{\ddagger}$ Skolkovo Institute of Science and Technology,
Moscow, Russia

\vskip .05cm $^{\ast}${Kazan Federal University, Kremlevskaya 17, Kazan
420008, Russia}
\end{center}

\vskip .6cm
\hfill {\sf I do not ask for better than not to be believed.}

\vskip .1cm
\hfill Axel Munthe, {\it The story of San Michele}

$\ $
\vskip .5cm

\begin{abstract}\noindent
We study the path $\Gamma=\{ C_{6,x}\ \vert\ x\in [0,1]\}$ in the moduli space of configurations of 6 equal cylinders touching the unit
sphere. Among the configurations $C_{6,x}$ is the record configuration
$C_{\mathfrak{m}}$ of \cite{OS}. We show that $C_{\mathfrak{m}}$ is a local sharp maximum of
the distance function, so in particular the configuration $C_{\mathfrak{m}}$ is not only unlockable but rigid.
We show that if $\frac{(1 + x) (1 + 3 x)}{3}$ is a rational number but not a square of a rational number, the configuration $C_{6,x}$
has some hidden symmetries, part of which we explain.
\end{abstract}

\newpage
\tableofcontents

\newpage
\section{Introduction}
This is a continuation of our work \cite{OS}. In that paper we were considering the configurations
of six (infinite) nonintersecting cylinders of the same radius $r$ touching the unit sphere
$\mathbb{S}^{2}\subset\mathbb{R}^{3}$.
We were interested in the maximal value
of $r$ for which this is possible. We have constructed in \cite{OS} the `record'
configuration $C_{\mathfrak{m}}$
of six cylinders of radius
\begin{equation}
r_{\mathfrak{m}}=\frac{1}{8}\left(  3+\sqrt{33}\right)  \approx
1.093070331,\label{30}
\end{equation}
thus we know that the maximal value of $r$ is at least $r_{\mathfrak{m}}.$ We
believe that $r_{\mathfrak{m}}$ is in fact the maximal possible value for $r$,
but we have no proof of that.

\vskip .2cm
In \cite{OS} we have constructed the deformation $C_{6,x}$ of the configuration $C_6$ of
six vertical unit nonintersecting cylinders.
The configuration $C_6$ corresponds to $x=1$ while $C_{\mathfrak{m}}$ -- to $x=1/2$. These configurations are shown on Figure \ref{confCcyl}
(the green unit ball is in the center).
\begin{figure}[th]
\centering
$\ \ \ \ $\raisebox{.3cm}{\includegraphics[scale=0.71]{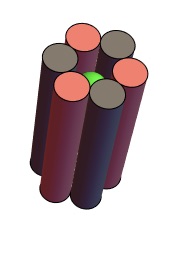}}$\ \ \ \ $\includegraphics[scale=0.352]{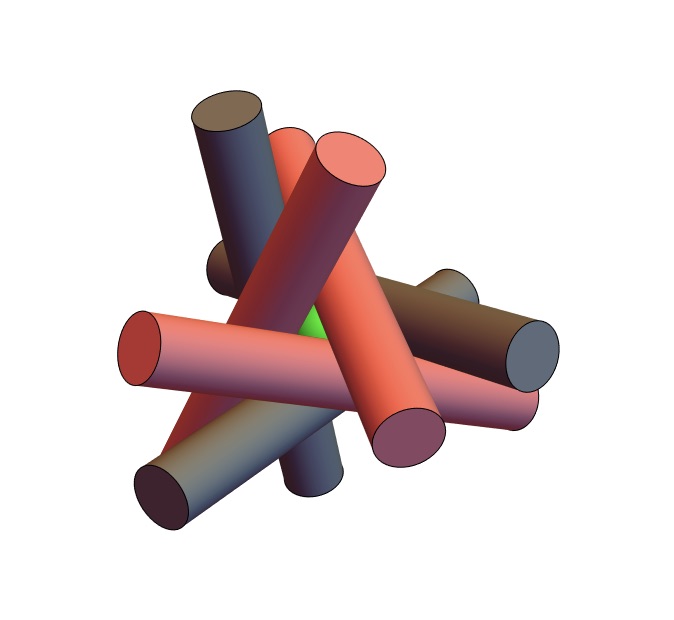}
\parbox{11.8cm}{
\caption{Two configurations of cylinders: the configuration $C_6$ of six parallel cylinders of radius 1 (on the left) and the configuration $C_{\mathfrak{m}}$ of six cylinders of radius $\,\approx\! 1.0931$ (on the right) } }
\label{confCcyl}
\end{figure}

\vskip.2cm
To explain the results of the present paper, we introduce some notation. A cylinder $\varsigma$
touching the unit sphere $\mathbb{S}^{2}$ has a unique generator (a line
parallel to the axis of the cylinder) $\iota(\varsigma)$ touching
$\mathbb{S}^{2}$. We will usually represent a configuration $\{\varsigma
_{1},\dots,\varsigma_{L}\}$ of cylinders touching the unit sphere by the
configuration $\{\iota(\varsigma_{1}),\dots,\iota(\varsigma_{L})\}$ of tangent
to $\mathbb{S}^{2}$ lines. The manifold of all such six-tuples we denote
by $M^{6}.$

\vskip .2cm
For example, let $C_{6}\equiv C_{6}\left(  0,0,0\right)  $ be the
configuration of six nonintersecting cylinders of radius $1,$ parallel to the
$z$ direction in $\mathbb{R}^{3}$ and touching the unit ball centered at the
origin. The configuration of tangent lines associated to the configuration $C_{6}$ is shown on Figure \ref{confC6tan}.

\begin{figure}[th]
\vspace{.4cm} \centering
\includegraphics[scale=0.28]{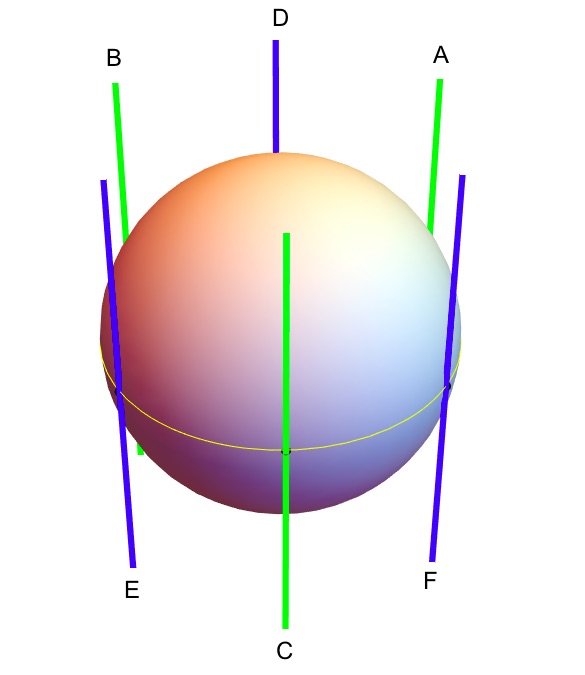}
\caption{Configuration $C_{6}$ of tangent lines}
\label{confC6tan}
\end{figure}

\vskip .2cm
Let $\varsigma^{\prime},\varsigma^{\prime\prime}$ be two equal cylinders of
radius $r$ touching $\mathbb{S}^{2},$ which also touch each other, while
$\iota^{\prime},\iota^{\prime\prime}$ are the corresponding tangents to
$\mathbb{S}^{2}.$ If $d=d_{\iota^{\prime}\iota^{\prime\prime}}$ is the
distance between $\iota^{\prime},\iota^{\prime\prime}$ then we have
\[
r=\frac{d}{2-d},
\]
so it is really the same - to study the manifold of six-tuples of cylinders of
equal radii, some of which are touching, or to study the manifold $M^{6}$ and
the function $D$ on it:

\[
D\left(  \iota_{1},\ldots ,\iota_{6}\right)  =\min_{1\leq i<j\leq6}d_{\iota
_{i}\iota_{j}}.
\]

\vskip .2cm
In this paper we prove that the configuration $C_{\mathfrak{m}}$ is a sharp local maximum of the function $D$.
In the process of the proof we also
show that ($\operatorname{mod}SO\left(  3\right)  $) the 15-dim tangent space
at $C_{\mathfrak{m}}$ contains a 4-dimensional subspace
along which the function $D\left(  \mathbf{m}\right)^2  $ decays quadratically,
while along any other tangent direction it decays linearly.

\vskip .2cm
It turns out that the question of finding sufficient conditions for the extrema of the $min$ functions can be
quite delicate.

\vskip .2cm
For the configuration $C_{\mathfrak{m}}$ we distinguish twelve relevant distances $l_1$, $\dots$, $l_{12}$ out of the total of fifteen
pairwise distances between the tangent lines. Thus the question is about the local maximum of the non-analytic
function (the minimum of the squares $F_1$, $\dots$, $F_{12}$ of these twelve distances) in fifteen variables.

\vskip .2cm
We make a general remark. Let $F_{1},\ldots ,F_{m}$ be analytic functions in $n$ variables, $n\geq m$, and let
\[ {\sf F}\left(  x\right)  :=\min\left\{  F_{1}\left(  x\right)
,\dots,F_{m}\left(x\right)  \right\} \ .\]
We assume that $F_j(0)=0$, $j=1,\dots,m$.
Suppose that the point $0\in\mathbb{R}^{n}$ is a local maximum of the function ${\sf F}\left(  x\right)$.
Then the differentials $dF_{1},\ldots ,dF_{m}$ are necessarily linearly dependent at $0$. Indeed, let $\Pi_j^+$, $j=1,\dots,m$,
be the half-space in $\mathbb{R}^{n}$ on which the differential $dF_j(0)$ is positive. If the differentials $dF_{1},\ldots ,dF_{m}$ are
independent at $0$ then the intersection of the half-spaces $\Pi_j$ is non-empty, so there is a direction from $0$
along which all $m$ functions $F_{i}$ are increasing, thus the point $0$ is not a local maximum of the function ${\sf F}\left(  x\right)$.
This remark is a generalization of the case $m=1$ (just one analytic function): if the point $0$ is a local maximum of an analytic function $F(x)$ then
its differential vanishes at the point $0$, $dF(0)=0$.

\vskip .2cm
We return for a moment to the configuration $C_{\mathfrak{m}}$. We calculate explicitly the differentials of the squares of the twelve relevant
distances $l_1$, $\dots$, $l_{12}$. Our first observation is that they are indeed not linearly independent.  More precisely, there is a \textit{single} linear
combination $\lambda$ of the differentials which vanish, $\lambda(dl_1^2,\dots,dl_{12}^2)=0$.

\vskip .2cm
We continue the general remark. Suppose that the point $0\in\mathbb{R}^{n}$ is a local maximum of the function ${\sf F}\left(  x\right)$
and there is exactly one linear dependency between the differentials $dF_{1},\ldots ,dF_{m}$ at $0$. Then this dependency {\it must} be convex,
in the sense that it must have the form $\lambda^1 dF_{1}+\ldots \lambda^m dF_{m}=0$ with $\lambda^j>0$, $j=1,\dots,m$.  Indeed, let us
assume that the single linear combination of the differentials is not convex. Then, renumbering, if necessary, the functions $F_j(x)$, we write the linear
dependency between the differentials in the following form
$$dF_1(0)=\mu^2 dF_2(0)\pm\dots \pm\mu^m dF_m(0)\ ,$$
with $\mu^j>0$, $j=2,\dots,m$. The differentials $dF_{2},\ldots ,dF_{m}$ are independent and the subset of $\mathbb{R}^{n}$
where $\mu^2 dF_2(0)\pm\dots \pm\mu^m dF_m(0)>0$ is a non-empty open convex cone in which all the differentials are positive, so again
the point $0$ cannot be a local maximum of the function ${\sf F}\left(  x\right)$.

\vskip .2cm
This is exactly what happens for the configuration $C_{\mathfrak{m}}$. The unique linear combination $\lambda(l_1,\dots,l_{12})=0$ of the differentials of the twelve
relevant distances is convex. We
thus have a four-dimensional linear subspace $E$ of the tangent space on which all twelve differentials vanish. Here 4 =
15 (dimension of the configuration space mod $SO(3)$) - 12 (the number of relevant distances) + 1 (the number of relations
between the differentials).

\vskip .2cm
The presence of the linear convex dependency between the differentials is necessary, but not
sufficient, and we have to continue the analysis.
Let $q_1$, $\dots$, $q_{12}$ denote the second differentials of the functions
$F_1$, $\dots$, $F_{12}$. Let $q$ be the restriction of the same convex combination $\lambda$ of the second
differentials to the space $E$, $q=\lambda(q_1,\dots,q_{12})\vert_E$.

\vskip .2cm
Our second observation is that the form $q$
is negatively defined. We prove (and it is not immediate) that the local maximality is implied by these two observations.

\vskip .2cm
Our results imply that the configuration $C_{\mathfrak{m}}$ is unlockable and, moreover, rigid.

\vskip .2cm
The precise meaning of the \textit{unlocking} is the following. Let $\Pi$ be a
collection of non-intersecting open solid bodies, $\Pi=\left\{  \Lambda_{1},\ldots,\Lambda_{k}\right\}  ,$
where each $\Lambda_{i}$ touches the unit central ball, while some distances
between bodies of $\Pi$ are zero. We call a family $\Pi(t)=\left\{  \Lambda_{1}(t),\ldots,\Lambda_{k}(t)\right\}$, $t\geq0$,
of collections of non-intersecting open solid bodies, touching the unit central ball,  a continuous deformation of the collection  $\Pi$ if $\Lambda_j(t)=g_j(t)\Lambda_j$, $j=1,\dots,k$,
where $g_j(t)$ is a continuous curve in the group of Euclidean motions of $\mathbb{R}^3$ with $g_j(0)=\text{Id}$.
 We say that $\Pi$ can be unlocked if there
exists a continuous deformation $\Pi\left(  t\right)$  of $\Pi$  such that some of zero distances between
the members of the configuration $\Pi$ are positive in $\Pi\left(  t\right)  $ for all $t>0$.

\vskip .2cm
We say that a configuration $\Pi$ is {\it rigid} if the only continuous deformations of $\Pi$ are global rotations in the three-dimensional space.

\vskip .2cm
In \cite{K} W. Kuperberg suggested another configuration of six unit non-intersecting cylinders touching the unit sphere and asked whether it can be
unlocked.  It is the
configuration $O_6$ shown on Figure \ref{octahedrConfCyl}.
We are planning to address this question in the forthcoming work \cite{OS-O6}.

\begin{figure}[th]
\vspace{-.4cm} \centering
\includegraphics[scale=0.6]{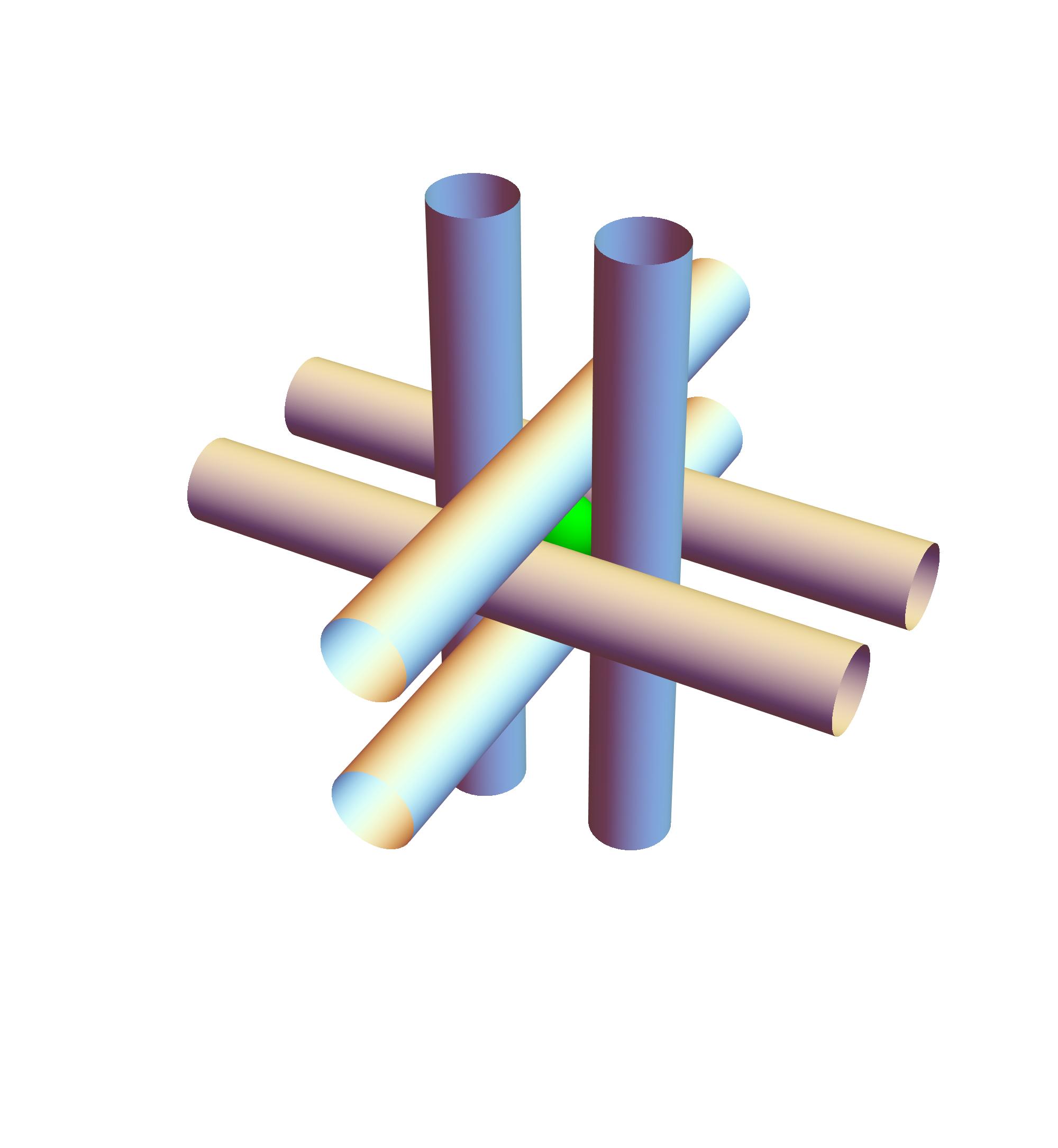}
\caption{Configuration $O_6$ of cylinders}
\label{octahedrConfCyl}
\end{figure}

\vskip .2cm
While calculating variations of distances we observed that the coefficients of the Taylor decompositions of squares of the pairwise distances around
the record point $C_{\mathfrak{m}}$
belong, after a certain normalization, to the field $\mathbb{Q}[\tau]$, where $\tau$ is the golden ratio. This miraculous fact
allows us to reveal a hidden symmetry of the formulas for the coefficients of the Taylor expansions of distances around
the point $C_{\mathfrak{m}}$.
Namely, the Galois conjugation in the field $\mathbb{Q}[\tau]$ restores the original $\mathbb{D}_6$-symmetry of the
configuration $C_6$.

\vskip .2cm
Puzzled by the hidden Galois symmetry, we performed several experiments wondering
whether this symmetry is specific for the record point $C_{\mathfrak{m}}$
or it is inherent at some other points of the curve $C_{6,x}$.
It turns out that for all rational $x$ such that $\sqrt{(1 + x) (1 + 3 x)/3}$ is not rational, we have a similar phenomenon. Namely,
when we perturb the configuration $C_{6,x}$, the Taylor coefficients,  after a certain normalization, belong to a (real) quadratic extension of
the rational field $\mathbb{Q}$, and Galois conjugation restores the  $\mathbb{D}_6$-symmetry of the formulas for variations.

\vskip .4cm
The paper is organized as follows. The next section introduces further
notation, concerning our manifold $M^{6}$. In Section \ref{mairesu} we
formulate our main local maximality results.
Section \ref{seproo} contains the proofs of assertions from Section \ref{mairesu}.
Our general results concerning sufficient conditions for the local extrema of the $min$
function are collected in Section \ref{forms}. The last Section \ref{hiddensymmetry} is devoted to the hidden symmetry of the
configuration $C_{\mathfrak{m}}$
and, more generally, to the hidden symmetry of the configurations $C_{6,x}$.

\section{Configuration manifold}
\label{sectConfManifold}
Here we collect the notation of \cite{OS} used below.

\vskip.2cm
Let $\mathbb{S}^{2}\subset\mathbb{R}^{3}$ be the unit sphere,
centered at the origin. For every $x\in\mathbb{S}^{2}$ we denote by $TL_{x}$
the set of all (unoriented) tangent lines to $\mathbb{S}^{2}$ at $x.$ We
denote by $M$ the manifold of tangent lines to $\mathbb{S}^{2}.$ We represent
a point in $M$ by a pair $\left(  x,\xi\right)  $, where $\xi$ is a unit
tangent vector to $\mathbb{S}^{2}$ at $x,$ though such a pair is not unique:
the pair $\left(  x,-\xi\right)  $ is the same point in $M.$

\vskip .2cm
We shall use the
following coordinates on $M$. Let $\mathbf{x,y,z}$ be the standard coordinate
axes in $\mathbb{R}^{3}$. Let $R_{\mathbf{x}}^{\alpha}$, $R_{\mathbf{y}
}^{\alpha}$ and $R_{\mathbf{z}}^{\alpha}$ be the counterclockwise rotations
about these axes by an angle $\alpha$, viewed from the tips of axes.

\vskip .2cm
We call
the point $\mathsf{N}=\left(  0,0,1\right)  $ the North pole, and
$\mathsf{S}=\left(  0,0,-1\right)  $ -- the South pole. By \textit{meridians}
we mean geodesics on $\mathbb{S}^{2}$ joining the North pole to the South
pole. The meridian in the plane $\mathbf{xz}$ with positive $\mathbf{x}$
coordinates will be called Greenwich. The angle $\varphi$ will denote the
latitude on $\mathbb{S}^{2},$ $\varphi\in\left[  -\frac{\pi}{2},\frac{\pi}
{2}\right]  ,$ and the angle $\varkappa\in\lbrack0,2\pi)$ -- the longitude, so
that Greenwich corresponds to $\varkappa=0.$ Every point $x\in\mathbb{S}^{2}$
can be written as $x=\left(  \varphi_{x},\varkappa_{x}\right)$.

\vskip .2cm
Finally, for each $x\in\mathbb{S}^{2}$, we denote by $R_{x}^{\alpha}$ the rotation by the
angle $\alpha$ about the axis joining $\left(  0,0,0\right)  $ to $x,$
counterclockwise if viewed from its tip, and by $\left(  x,\uparrow\right)  $
we denote the pair $\left(  x,\xi_{x}\right)  ,$ $x\neq\mathsf{N,S,}$ where
the vector $\xi_{x}$ points to the North. We also abbreviate the notation
$\left(  x,R_{x}^{\alpha}\uparrow\right)  $ to $\left(  x,\uparrow_{\alpha
}\right)  $.

\vskip.2cm
Let $u=\left(  x^{\prime},\xi^{\prime}\right)  ,$ $v=\left(
x^{\prime\prime},\xi^{\prime\prime}\right)  $ be two lines in $M$. We denote
by $d_{uv}$ the distance between $u$ and $v$; clearly $d_{uv}=0$ iff $u\cap
v\neq\varnothing.$ If the lines $u,v$ are not parallel then the square of
$d_{uv}$ is given by the formula

\begin{equation}\label{formdist}
d_{uv}^{2}=\frac{\det^{2}[\xi^{\prime},\xi^{\prime\prime},x^{\prime\prime
}-x^{\prime}]}{1-(\xi^{\prime},\xi^{\prime\prime})^{2}}\ ,
\end{equation}
where $(\ast,\ast)$ is the scalar product.

\vskip.2cm
We note that if
$d_{uv}=d>0$ then the cylinders $\text{C}_{u}\left(  r\right)  $ and $\text{C}_{v}\left(
r\right)  ,$ touching $\mathbb{S}^{2}$ at $x^{\prime},x^{\prime\prime},$
having directions $\xi^{\prime},\xi^{\prime\prime},$ and radius $r,$ touch
each other iff
\begin{equation}\label{randd}
r=\frac{d}{2-d}.
\end{equation}
Indeed, if the cylinders touch each other, we have the proportion:
\begin{equation}
\frac{d}{1}=\frac{2r}{1+r}.
\end{equation}

We denote by $M^{6}$ the manifold of 6-tuples
\begin{equation}
\mathbf{m}=\left\{  u_{1},\ldots ,u_{6}:u_{i}\in M,i=1,\ldots ,6\right\}  .
\end{equation}
We are studying the critical points of the function
\[
D\left(  \mathbf{m}\right)  =\min_{1\leq i<j\leq6}d_{u_{i}u_{j}}.
\]
Note that $D\left(  C_{6}\right)  =1$.

\section{The critical point $C_{6}\left(  \varphi_{\mathfrak{m}},\delta_{\mathfrak{m}},\varkappa_{\mathfrak{m}}\right)$\vspace{.2cm}}
\label{mairesu}
The configuration $C_{6}\equiv C_{6}\left(  0,0,0\right)$  in our notation can be
written as

\[
\begin{array}[c]{ll}
C_{6}     = &\left\{   A=\left[ \left(
0,\frac{\pi}{6}\right)   , \uparrow\right]   , D= \left[  \left(  0,\frac{\pi}
{2}\right)   , \uparrow\right]   ,\right.  \\[1.2em]
& \hspace{.6cm}
 B=\left[  \left(  0,\frac{5\pi}{6}\right)
, \uparrow\right]   ,
E=\left[  \left(  0,\frac{7\pi}{6}\right)   , \uparrow
\right]   ,\\[1.2em]
& \hspace{.6cm}\left.
 C=\left[ \left(  0,\frac{3\pi}{2}\right)   , \uparrow\right]   , F=\left[
\left(  0,\frac{11\pi}{6}\right)   , \uparrow\right]   \right\}  .
\end{array}
\]

\vskip .2cm
We need also the configurations $C_{6}\left(  \varphi,\delta,\varkappa\right)$:
\begin{equation}\label{confphideka}
\begin{array}
[c]{ll}
& C_{6}\left(  \varphi,\delta,\varkappa\right)  =\left\{  A=\left[  \left(
\varphi,\frac{\pi}{6}-\varkappa\right)  ,\uparrow_{\delta}\right]  ,D=\left[
\left(  -\varphi,\frac{\pi}{2}+\varkappa\right)  ,\uparrow_{\delta}\right]
,\right.  \\[1em]
& \hspace{2.74cm}B=\left[  \left(  \varphi,\frac{5\pi}{6}-\varkappa\right)
,\uparrow_{\delta}\right]  ,E=\left[  \left(  -\varphi,\frac{7\pi}
{6}+\varkappa\right)  ,\uparrow_{\delta}\right]  ,\\[1em]
& \hspace{2.74cm}\left.  C=\left[  \left(  \varphi,\frac{3\pi}{2}
-\varkappa\right)  ,\uparrow_{\delta}\right]  ,F=\left[  \left(
-\varphi,\frac{11\pi}{6}+\varkappa\right)  ,\uparrow_{\delta}\right]
\right\}  .
\end{array}
\end{equation}

In \cite{OS} we have constructed a a continuous curve
\begin{equation}\label{curgamma}
\gamma(\varphi)=C_{6}\bigl(\varphi,\delta\left(  \varphi\right)
,\varkappa\left(  \varphi\right)  \bigr)\ ,\ \varphi\in\left[  0;\frac{\pi}
{2}\right]\ ,\end{equation}
on which the function $D\bigl(  \gamma(\varphi)\bigr) $
grows for $\varphi\in\left[  0,\varphi_{\mathfrak{m}}\right]  $ and decays for
$\varphi>\varphi_{\mathfrak{m}}$. For the `record' point $C_{\mathfrak{m}}=C_{6}\left(  \varphi_{\mathfrak{m}
},\delta_{\mathfrak{m}},\varkappa_{\mathfrak{m}}\right)$ we have
$$D\bigl(\gamma(\varphi_{\mathfrak{m}})\bigr)=\sqrt{\frac{12}{11}}\ ,$$ with
$$\varphi_{\mathfrak{m}}=\arcsin\sqrt{\frac{3}{11}}\ , \ \varkappa_{\mathfrak{m}
}=-\arctan\frac{1}{\sqrt{15}}\ ,\ \delta_{\mathfrak{m}}=\arctan\sqrt{\frac
{5}{11}} \ .$$
The radii of the corresponding cylinders are equal to $r_{\mathfrak{m}}$ (see formula
(\ref{30})) which, we believe is the maximal possible common radius for six non-inter\-secting cylinders touching the unit ball.

\vskip .2cm
Note that placing the cylinders of radius 1 instead of $r_{\mathfrak{m}}$ leaves
a spacing $2(r_{\mathfrak{m}}-1)$ for each cylinder. Even if we could manage to move these unit cylinders
in such a way that the spacings would behave additively then the total spacing would be
$6\cdot 2(r_{\mathfrak{m}}-1)\approx 1.116843972$ which does not make enough room for a seventh unit cylinder
(the problem of whether seven infinite circular non-intersecting unit cylinders can be arranged about a central unit ball is open).

\vskip .2cm
The configuration $C_{\mathfrak{m}}$ is shown on Figures \ref{record1},
\ref{record2} and \ref{record3}.

\vskip .2cm
There is now an animation, on the page of Yoav Kallus \cite{Ka},
demonstrating the motion of the configuration of 6 cylinders along the curve $\gamma(\varphi)$.

\newpage

\begin{figure}[h!]
\centering
\includegraphics[scale=0.22]{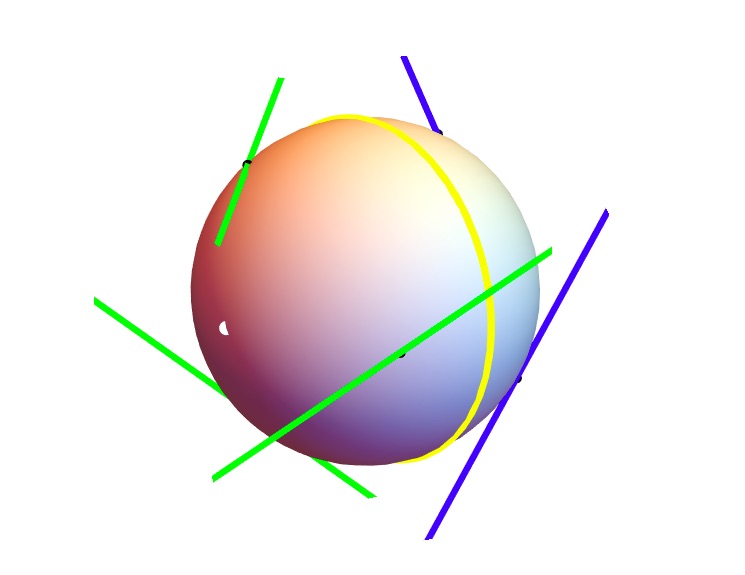}
\caption{Record configuration, side view, the equator is yellow, the north
pole is white\label{record1}}
\includegraphics[scale=0.2]{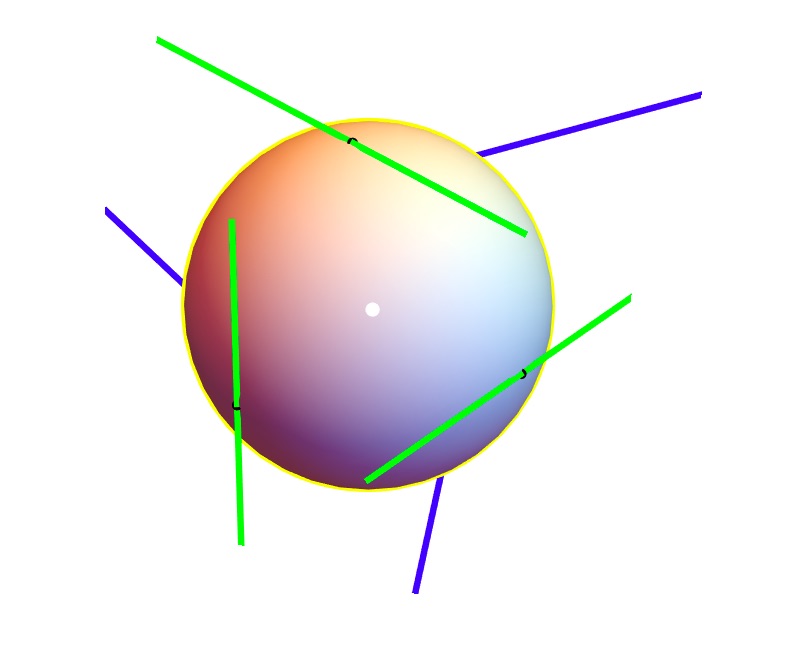}
\caption{ Record configuration again, three upper tangency points shown\label{record2}}
\includegraphics[scale=0.2]{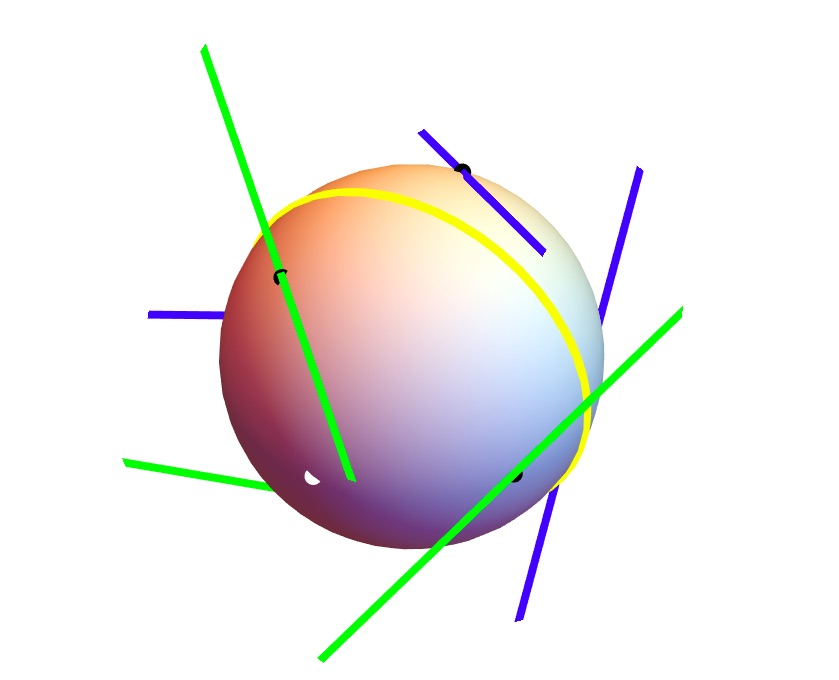}
\caption{Record configuration once more, two upper and one lower tangency points shown\label{record3}}
\end{figure}

\newpage
\subsection{Main maximality result\vspace{.1cm}}\label{secC6}
In \cite{OS} we have shown that the function $D\left(  C_{6}\left(
\varphi,\delta\left(  \varphi\right)  ,\varkappa\left(  \varphi\right)
\right)  \right)$ has a global maximum at the point $\varphi_{\mathfrak{m}}$, corresponding to the configuration
$C_{\mathfrak{m}}$. We now study the function $D$ in the vicinity
of the point $C_{\mathfrak{m}}\ $ in the whole space $M^{6}$.
\begin{theorem}\label{theorC6}
\label{Main} The configuration $C_{\mathfrak{m}}\ $is a point of a sharp local maximum
of the function $D:$ for any point $\mathbf{m}$ in a vicinity of
$C_{\mathfrak{m}}$ we have
\[
D\left(  \mathbf{m}\right)  <\sqrt{\frac{12}{11}}=D\left(  C_{\mathfrak{m}
}\right)  .
\]
\end{theorem}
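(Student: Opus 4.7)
The plan is to apply the general sufficient criterion for a local maximum of a $\min$-function outlined in the introduction and developed in Section \ref{forms}. Two hypotheses must be verified on the twelve squared distances $F_k = l_k^2$, $k = 1, \dots, 12$, at the point $C_{\mathfrak{m}}$: (i) there is exactly one linear dependence among the differentials $dF_1, \dots, dF_{12}$ on the tangent space to $M^6/SO(3)$, and its coefficients are all strictly positive after normalization; (ii) denoting by $E$ the common kernel of the $dF_k$ and by $q_k$ the Hessian of $F_k$, the convex combination $q = \lambda^1 q_1 + \dots + \lambda^{12} q_{12}$ restricted to $E$ is negative definite.

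First I would fix explicit local coordinates on $M^6$ near $C_{\mathfrak{m}}$. Using the parametrization of a tangent line by a point of $\mathbb{S}^2$ together with a tangent angle (as in (\ref{confphideka})), $M$ becomes $3$-dimensional, so $M^6$ has dimension $18$ and its quotient by the diagonal $SO(3)$-action has dimension $15$. Substituting these coordinates into formula (\ref{formdist}) produces explicit expressions for each $F_k$ as a rational function in $15$ variables; the twelve indices $k$ correspond to the twelve pairs of tangent lines realizing the minimum distance $\sqrt{12/11}$ at $C_{\mathfrak{m}}$, which can be read off from Figures \ref{record2} and \ref{record3}. The remaining three pairs have squared distance strictly greater than $12/11$, hence, by continuity, remain above $12/11$ in a neighborhood and may be ignored.

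Next I would compute the $12 \times 15$ Jacobian matrix $(\partial F_k/\partial x^a)$ at $C_{\mathfrak{m}}$. The claim of the introduction is that its rank equals $11$ and that its one-dimensional left null space is spanned by a vector $\lambda = (\lambda^1,\dots,\lambda^{12})$ whose entries have all the same sign, so can be normalized to be strictly positive. This reduces to linear algebra over $\mathbb{Q}[\tau]$ thanks to the rationality phenomenon highlighted at the end of the introduction. Granted this, $E := \bigcap_{k=1}^{12}\ker dF_k$ is a well-defined $4$-dimensional subspace of the $15$-dimensional tangent space, matching the count $15 - 12 + 1 = 4$ stated in the introduction.

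The main obstacle is step (ii): I would expand each $F_k$ to second order at $C_{\mathfrak{m}}$ to obtain the Hessians $q_k$, assemble the convex combination $q = \sum_k \lambda^k q_k$, restrict it to $E$, and diagonalize the resulting $4\times 4$ symmetric matrix. Negative-definiteness will be verified either by Sylvester's criterion on the leading principal minors or by direct computation of the spectrum; the expected $\mathbb{Q}[\tau]$-structure of the entries should make the signs checkable in closed form. Once (i) and (ii) are established, the general theorem of Section \ref{forms} yields that $\mathsf{F} := \min(F_1,\dots,F_{12})$, and hence $D = \sqrt{\mathsf{F}}$, attains a sharp local maximum at $C_{\mathfrak{m}}$, proving Theorem \ref{theorC6}.
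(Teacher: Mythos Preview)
Your proposal is correct and follows essentially the same approach as the paper: fix coordinates by pinning line $A$ to kill the $SO(3)$-action, discard the three long distances, compute the twelve differentials and verify the single convex relation (\ref{concomconfC}), then restrict the same convex combination of Hessians to the four-dimensional kernel $E$ and check that the resulting $4\times 4$ form (\ref{negPhi}) is negative definite, concluding via Theorem~\ref{lq2}. The only refinements the paper adds are the explicit normalization (\ref{strnorma}) that forces all Taylor coefficients into $\mathbb{Q}[\tau]$ and the concrete choice of coordinates $w_1,\dots,w_4$ on $E$.
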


In the process of the proof, we will see that there exists a 4-dimensional subspace $L_{quadr}$ in the tangent space of
$M^{6}$ at $C_{\mathfrak{m}},$ such that for any $l\in L_{quadr}$ we have
\[ -c_u \left\Vert l \right\Vert t^{2}\leq D\left(  C_{\mathfrak{m}
}+tl\right)  -D\left(  C_{\mathfrak{m}}\right)  \leq -c_d \left\Vert l \right\Vert t^{2}\]
for $t$ small enough. Here $c_d$ and $c_u$ are some constants, $0<c_d \leq c_u <+\infty$ and 
$C_{\mathfrak{m}}+tl\in M^{6}$ stands for the exponential map applied to the tangent vector $tl$.  

\vskip .2cm
For the tangent vectors $l$ outside $L_{quadr}$ we have
\[
-c_u^{\prime}\left(  l\right)  t\leq D\left(  C_{\mathfrak{m}}+tl\right)
-D\left(  C_{\mathfrak{m}}\right)  \leq -c_d^{\prime}\left(  l\right)  t,
\]
where now $c_d^{\prime}\left(  l\right)$ and $c_u^{\prime}\left(  l\right)$ are some positive valued functions of $l$, 
$0<c^{\prime}\left(  l\right)  \leq c^{\prime\prime}\left(  l\right)<+\infty$.

\vskip .2cm
Note, however, that the last two inequalities
do not imply our Theorem, as the example of the Section \ref{toyex} shows.

\subsection{Toy example\vspace{.1cm}}\label{toyex}
Suppose that we know that along any tangent direction to
the critical point the function $D$ decays, either linearly or
at least quadratically. We emphasize that this property by itself
\textit{does not} imply our main result about the local maximality of this point.
An extra work is needed to prove the maximality statements. The following
example explains this.

\vskip .2cm
Let $f$ be a function of two variables defined by
\[
f:=\text{min}\{u_{1},u_{2}\}\ \ \text{where}\ \ u_{1}=-y+3x^{2},u_{2}
=y-x^{2}\ .
\]
The function $f$ equals 0 at the origin.
Consider an arbitrary ray $l$ starting at the origin. Clearly, for some time
this ray evades the `horns' -- the region between the parabolas $y=3x^{2}$ and
$y=x^{2}.$ But outside the horns the function $f$ is negative. Indeed,  inside
the the narrow parabola $y=3x^{2}$ we have $u_{1}<0,u_{2}>0$ so $f$ is
negative there; outside the wide parabola $y=x^{2}$ we have $u_{1}>0,u_{2}<0$
so $f$ is negative there as well. Therefore the origin is a local
maximum of $f$ restricted to $l,$ for any $l.$ Yet the origin is not a local
maximum of the function $f$ on the plane, because inside the horns the
functions $u_{1}$ and $u_{2}$ are positive so $f$ there is positive as well
\footnote{Compare with the `Example of a differentiable function possessing no
extremum at the origin but for which the restriction to an arbitrary line
through the origin has a strict relative minimum there', Chapter 9 in
\cite{GO}.}.

\vskip .2cm Note that there is a convex linear combination of the
differentials of the functions $u_{1}$ and $u_{2}$ which vanishes (the sum of
the differentials); both differentials vanish on the line $y=0$. However the
restriction of the sum of the second differentials to this line is positive, in line with our Theorem \ref{lq2},
Subsection \ref{formsC6}.

\vskip.1cm This toy example captures the essential features of questions we
encounter in the study of local maxima of the distance functions. Some general
theorems needed to establish maximality assertions are formulated and proven in Section \ref{forms}.

\section{Proof of Theorem \ref{theorC6}\vspace{.2cm}}
\label{seproo}

For any $\varphi,\delta,\varkappa$ the configuration $C_{6}\left(  \varphi,\delta,\varkappa\right)$, defined by the
formula (\ref{confphideka}),
possesses the dihedral symmetry group $\mathbb{D}_{3}\equiv\mathbb{Z}_{2}\ltimes\mathbb{Z}_{3}$. The group
$\mathbb{D}_{3}$ is generated by the rotations $R_{\mathbf{z}}^{120^{\circ}}$
and $R_{\mathbf{x}}^{180^{\circ}}$. Because of the $\mathbb{D}_{3}$-symmetry, the fifteen pairwise distances between the lines in the configuration $C_{6}\left(  \varphi,\delta,\varkappa\right)$ split into four groups: we have
$d_{AB}=d_{BC}=d_{CA}=d_{DE}=d_{EF}=d_{FD}$ for the six-plet $\{AB,BC,CA,DE,EF,FD\}$,
and $d_{AD}=d_{BE}=d_{CF}$, $d_{AF}=d_{BD}=d_{CE}$ and $d_{AE}=d_{BF}=d_{CD}$ for the three triplets $\{AD,BE,CF\}$,
$\{AF,BD,CE\}$ and $\{AE,BF,CD\}$. For any configuration $\gamma(\varphi)$ lying on the curve $\gamma$, see (\ref{curgamma}),
twelve of the fifteen distances coincide (we additionally have $d_{AB}=d_{AD}=d_{AF}$).
In the proof we shall study variations of distances in a vicinity of the point
$C_{6}\left(  \varphi_{\mathfrak{m}},\delta_{\mathfrak{m}},\varkappa_{\mathfrak{m}}\right)\equiv\gamma(\varphi_{\mathfrak{m}})$ in $M^{6}$. The distances from the $\{AE,BF,CD\}$-triplet are greater than the other twelve (equal) distances for the
configuration $C_{6}\left(  \varphi_{\mathfrak{m}},\delta_{\mathfrak{m}},\varkappa_{\mathfrak{m}}\right)$, see \cite{OS}, so
these three distances are not relevant in the study of the local maximality of the configuration
$C_{6}\left(  \varphi_{\mathfrak{m}},\delta_{\mathfrak{m}},\varkappa_{\mathfrak{m}}\right)$.

\vskip .2cm
The perturbed position of a tangent line $J\in \{A,B,C,D,E,F\}$ is
$$J=J(\varkappa_{\mathfrak{m}}+\Delta_J^{\varkappa},\varphi_{\mathfrak{m}}+
\Delta_J^{\varphi},\delta_{\mathfrak{m}}+\Delta_J^{\delta})$$
where
$$\{\varkappa_{\mathfrak{m}},\varphi_{\mathfrak{m}},\delta_{\mathfrak{m}}\}$$
is the maximal point on our $D_3$-symmetric curve and
\begin{equation}\label{strnorma}
\begin{array}{c}
\displaystyle{ \Delta_J^{\varkappa}=\frac{11}{32\sqrt{3}}\left(J_{\varkappa,1}t+J_{\varkappa,2}t^2+o(t^2)\right)\ ,}\\[1.5em]
\displaystyle{ \Delta_J^{\varphi}=\frac{11}{4\sqrt{6}}\left(J_{\varphi,1}t+J_{\varphi,2}t^2+o(t^2)\right)\ ,}\\[1.5em]
\displaystyle{ \Delta_J^{\delta}=\frac{11\sqrt{11}}{48}\left(J_{\delta,1}t+J_{\delta,2}t^2+o(t^2)\right)\ .}
\end{array}\end{equation}
The numerical factors here are introduced for convenience, because with this normalization the coefficients of the Taylor decompositions  of squares of distances around the record point
$C_{6}\left(  \varphi_{\mathfrak{m}},\delta_{\mathfrak{m}},\varkappa_{\mathfrak{m}}\right)$ belong
(we do not understand the reason for this) to the field $\mathbb{Q}[\tau]$, where $\tau$ is the golden section,
$$\tau=\frac{1+\sqrt{5}}{2}\ ,\ \bar\tau=\frac{1-\sqrt{5}}{2}\ .$$

\vskip .2cm
To fix the rotational symmetry we keep the tangent line $A$ at its place, that is,
$A_{\varphi,j}=A_{\varkappa,j}=A_{\delta,j}=0$, $j=1,2,\dots$, working therefore in the 15-dimensional space
$M^{6}$ mod $SO(3)$.

\subsection{Variation of distances in the first order\vspace{.1cm}}\label{vadifi}
We first list all differentials.

\vskip .2cm
Let
\begin{equation}\label{begade}
\begin{array}{c}
\displaystyle{ \beta_{11}=\frac{2}{11}(4-\sqrt{5})\ ,\ \bar\beta_{11}=\frac{2}{11}(4+\sqrt{5})\ ,}\\[1em]
\displaystyle{ \gamma_{-19}=-1-2\sqrt{5}\ ,\ \bar\gamma_{-19}=-1+2\sqrt{5}\ .}\end{array}\end{equation}
The seemingly strange indices of $\beta$ and $\gamma$ are
used due to the prime decompositions
$$11=(4+\sqrt{5})(4-\sqrt{5})\ ,\ -19=(-1-2\sqrt{5})(-1+2\sqrt{5})$$
in the ring $\mathbb{Z}[\tau]$ of golden integers.

\vskip .2cm
The differentials of the $\{AF,CE,BD\}$-triplet are
$$\left[d(AF)^2\right]_1=\beta_{11}\left( \tau^2\beta_{11} F_{\varkappa,1}+\tau^3F_{\varphi,1}+F_{\delta,1}\right),$$
$$\left[d(CE)^2\right]_1=\beta_{11}\left( \tau^2\beta_{11} ( C_{\varkappa,1}+E_{\varkappa,1})+\tau^3(C_{\varphi,1}+E_{\varphi,1})+
( C_{\delta,1}+E_{\delta,1})\right),$$
$$\left[d(BD)^2\right]_1=\beta_{11}\left( \tau^2\beta_{11} ( B_{\varkappa,1}+D_{\varkappa,1})+\tau^3(B_{\varphi,1}+D_{\varphi,1})+
( B_{\delta,1}+D_{\delta,1})\right).$$

\vskip .2cm
The differentials of the $\{CF,BE,AD\}$-triplet are
$$\left[d(CF)^2\right]_1=-\bar\beta_{11}\left( \bar\tau^2\bar\beta_{11} ( C_{\varkappa,1}+F_{\varkappa,1})-\bar\tau^3 (C_{\varphi,1}+F_{\varphi,1})+( C_{\delta,1}+F_{\delta,1})\right),$$
$$\left[d(BE)^2\right]_1=-\bar\beta_{11}\left( \bar\tau^2\bar\beta_{11} ( B_{\varkappa,1}+E_{\varkappa,1})-\bar\tau^3 (B_{\varphi,1}+E_{\varphi,1})+( B_{\delta,1}+E_{\delta,1})\right),$$
$$\left[d(AD)^2\right]_1=-\bar\beta_{11}\left( \bar\tau^2\bar\beta_{11} D_{\varkappa,1}-\bar\tau^3 D_{\varphi,1}+D_{\delta,1}\right).$$
\vskip .2cm
For the 6-plet $\{AB,BC,CA,DE,EF,FD\}$ we have
$$\left[d(AB)^2\right]_1=\frac{1}{5}\left(  B_{\varkappa,1}-2\tau\bar\gamma_{-19}B_{\varphi,1}-2\bar\tau B_{\delta,1}\right),$$
$$\left[d(BC)^2\right]_1=\frac{1}{5}\left(  C_{\varkappa,1}- B_{\varkappa,1}-2\bar\tau\gamma_{-19}B_{\varphi,1}-2\tau\bar\gamma_{-19}C_{\varphi,1}+
2\tau B_{\delta,1}-2\bar\tau C_{\delta,1}\right),$$
$$\left[d(CA)^2\right]_1=\frac{1}{5}\left( -C_{\varkappa,1} -2\bar\tau\gamma_{-19}C_{\varphi,1}+2\tau C_{\delta,1}\right),$$
$$\left[d(DE)^2\right]_1=\frac{1}{5}\left(  D_{\varkappa,1}- E_{\varkappa,1}-2\bar\tau\gamma_{-19}E_{\varphi,1}-2\tau\bar\gamma_{-19}D_{\varphi,1}+
2\tau E_{\delta,1}-2\bar\tau D_{\delta,1}\right),$$
$$\left[d(EF)^2\right]_1=\frac{1}{5}\left(  E_{\varkappa,1}- F_{\varkappa,1}-2\bar\tau\gamma_{-19}F_{\varphi,1}-2\tau\bar\gamma_{-19}E_{\varphi,1}+
2\tau F_{\delta,1}-2\bar\tau E_{\delta,1}\right),$$
$$\left[d(FD)^2\right]_1=\frac{1}{5}\left(  F_{\varkappa,1}- D_{\varkappa,1}-2\bar\tau\gamma_{-19}D_{\varphi,1}-2\tau\bar\gamma_{-19}F_{\varphi,1}+
2\tau D_{\delta,1}-2\bar\tau F_{\delta,1}\right).$$
The expressions for the differentials $\left[d(AF)^2\right]_1$, $\left[d(AD)^2\right]_1$, $\left[d(AB)^2\right]_1$ and
$\left[d(CA)^2\right]_1$ look shorter but this is only because of our convention to keep the tangent line $A$ fixed.

\subsection{Linear dependence of differentials\vspace{.1cm}}
Our 12 linear functionals are not linearly independent. To see this consider
the functionals
$$\mathcal{S}_1\!:=\!\left[d(AB)^2\right]_1\!+\!\left[d(BC)^2\right]_1\!+\!\left[d(CA)^2\right]_1\!+\!\left[d(DE)^2\right]_1\!+\!\left[d(EF)^2\right]_1\!+\!\left[d(FD)^2\right]_1,$$
$$\mathcal{S}_2\!:=\!\left[d(AF)^2\right]_1\!+\!\left[d(CE)^2\right]_1\!+\!\left[d(BD)^2\right]_1$$
and
$$\mathcal{S}_3\!:=\!\left[d(CF)^2\right]_1\!+\!\left[d(BE)^2\right]_1\!+\!\left[d(AD)^2\right]_1.$$
Let also (keeping in mind that $A_{\varphi,1}=A_{\varkappa,1}=A_{\delta,1}=0$)
$$\mathcal{F}:=\left( A_{\varphi,1}+B_{\varphi,1}+C_{\varphi,1}+D_{\varphi,1}+E_{\varphi,1}+F_{\varphi,1}\right)$$
and
$$\mathcal{D}:=\left( A_{\delta,1}+B_{\delta,1}+C_{\delta,1}+D_{\delta,1}+
E_{\delta,1}+F_{\delta,1}\right).$$

A direct computation shows that
$$\mathcal{S}_1=
-\frac{18}{5}\mathcal{F}+\frac{2}{\sqrt{5}}\mathcal{D}\ ,$$
and
$$(23+3\sqrt{5})\mathcal{S}_2+(23-3\sqrt{5})\mathcal{S}_3=36\mathcal{F}-4\sqrt{5}\mathcal{D}\ ,$$
so the following strictly convex combination (that is, the linear combination with positive coefficients) of the differentials vanish:
\begin{equation}\label{concomconfC}
10\mathcal{S}_1+(23+3\sqrt{5})\mathcal{S}_2+(23-3\sqrt{5})\mathcal{S}_3=0\ .\end{equation}

This is the only relation between the differentials: a direct computation shows that the linear space, spanned by the differentials, is 11-dimensional.

\vskip .2cm
Let $E$ denote the null-space of our differentials, i.e. the linear subspace of the tangent space on which all the differentials vanish. It has the dimension $4=15-11$.

\subsection{Relevant quadratic form\vspace{.1cm}}
We have now to study our functions to the next order, $t^{2}$.
We have performed the calculation of the 12 quadratic forms using Mathematica \cite{W}.
The formulas are quite lengthy and we do not reproduce the full details
since it is just an intermediate result.

\vskip .2cm
The only quadratic form important to us is, as we will explain in Section \ref{forms}, the same combination (\ref{concomconfC}) but calculated
for $\left[d(**)^2\right]_2$ instead of $\left[d(**)^2\right]_1$.

\vskip .2cm
This is a quadratic form in 15 variables. However, due to the results of Section \ref{forms}, it is enough to
calculate the restriction of this form to the null-space $E$ of the differentials.
If this restriction to $E$ would be  negatively defined, that will prove our result.

\vskip .2cm
The subspace $E$ has dimension 4. As independent variables on $E$ we choose
$$w_1:=E_{\varkappa,1}\ ,\ w_2:=E_{\varphi,1}\ ,\ w_3:=B_{\delta,1}\ \text{ and }\  w_4:=C_{\delta,1}\ .$$ The resulting quadratic form on $E$ is $\sum \Phi_{ij}w_i w_j$ where

\begin{equation}\label{negPhi}
\Phi =\frac{11}{9}\left(\begin{array}{cccc}
\displaystyle{ -\frac{919}{24}}&\displaystyle{ \frac{5663}{12\sqrt{5}}}&\displaystyle{ -\frac{\bar\mu_1}{30} }
&\displaystyle{ -\frac{\mu_1}{30} }\\[1.5em]
\displaystyle{ \frac{5663}{12\sqrt{5}} }&\displaystyle{ -\frac{18663}{6}}&\displaystyle{ -\frac{7\bar\mu_2}{15}}
&\displaystyle{ \frac{7\mu_2}{15}}\\[1.5em]
\displaystyle{ -\frac{\bar\mu_1}{30}}&\displaystyle{ -\frac{7\bar\mu_2}{15}}&\displaystyle{ -\frac{4\bar\mu_3}{15}}&700\\[1.5em]
\displaystyle{ -\frac{\mu_1}{30}}&\displaystyle{ \frac{7\mu_2}{15}}&700&\displaystyle{ -\frac{4\mu_3}{15}}
\end{array}\right)\ .\end{equation}\vskip .2cm
\noindent Here $\mu_1=2865+1438\sqrt{5}$, $\mu_2=3530+939\sqrt{5}$, $\mu_3=5335+1878\sqrt{5}$, and bar stands for the Galois conjugation (the replacement of $\sqrt{5}$ by $-\sqrt{5}$) in the field $\mathbb{Q}[\tau]$.

\vskip .2cm
A direct calculation shows that $\Phi$ is indeed negatively defined and the result follows, by applying the theorem
\ref{lq2}, Section \ref{forms}.

\filbreak
\section{Local maximum\vspace{.2cm}}\label{forms}
This section provides a sufficient condition which ensures that the point $\mathbf{0}\in\mathbb{R}^{n}$ is a sharp local maximum
of the function
\begin{equation} {\sf F}\left(  x\right)  :=\min\left\{  F_{1}\left(  x\right)
,\dots,F_{m}\left(x\right)  \right\} \ ,\label{deffuF}\end{equation}
where the functions $F_{1}\left(  x\right)  ,\dots,F_{m}\left(  x\right)$ are analytic in a neighborhood of $0\in\mathbb{R}^n$
and $F_u(0)=0$,  $u=1,\dots,m$
(for the configurations of tangent lines in Theorem \ref{Main}
the functions $F_u$ are the differences between the squares of distances in the perturbed and initial configurations). This sufficient condition is needed
to complete the proofs of Theorem \ref{Main}.

\vskip .2cm
We denote by $l_{uj}$ and $q_{ujk}$ the coefficients of the linear and quadratic parts of the function $F_u(x)$, $u=1,\dots,m$,
\begin{equation}\label{decofuf}F_u(x)=l_{uj}x^j+q_{ujk}x^j x^k+o(2)\ ,\end{equation}
where $o(2)$ stand for higher order terms. Here and till the end of the Section the summation over repeated coordinate indices is assumed.

\vskip .2cm
Let $\xi^j$, $j=1,\dots,n$, be the coordinates, corresponding to the coordinate system $x^1,\dots,x^n$, in the tangent space
to  $\mathbb{R}^n$ at the origin. We define the linear and quadratic forms $l_u(\xi)\equiv l_{uj}\xi^j$ and $q_u(\xi)\equiv q_{ujk}\xi^j\xi^k$ on the
tangent space  $T_0\mathbb{R}^n$. Let $E$ be the subspace in $T_0\mathbb{R}^n$ defined as the intersection of kernels of the linear forms $l_u(\xi)$,
$$E=\bigcap_{u=1}^m\; \ker l_u(\xi)\ .$$

The configuration $C_{6}\left(  \varphi_{\mathfrak{m}}
,\delta_{\mathfrak{m}},\varkappa_{\mathfrak{m}}\right)$
provides a particular example, see Section \ref{seproo}, of a family $\left\{  F_{1}\left(  x\right)
,\dots,F_{m}\left(  x\right)  \right\}$ of $m$ analytic functions in $n$ variables, $m\leq n$, possessing the following two properties:
\begin{itemize}
\item[(A)] The linear space, generated by the linear forms $l_{1}(\xi),\dots,l_{m}(\xi)$ is $(m-1)$-dimensional; moreover, the linear relation between
$l_{1}(\xi),\dots,l_{m}(\xi)$ is strictly convex,
\begin{equation}\lambda^{1}l_{1}(\xi)+\ldots
+\lambda^m l_{m}(\xi)=0\ ,\label{32}\end{equation}
with $\lambda^{i}>0\ ,\ 1\leq i\leq m-1$.
Therefore,
\begin{equation}
\label{mima1}\text{If\ $l_u(\xi)\geq 0$\ for\ all\ $u=1,\dots,m$\ then\ $\xi\in E$}\ .
\end{equation}

\item[(B)] The inequality
\begin{equation}\label{uslo2}
\bigl(\lambda^{1}q_{1}(\xi)+\ldots
+\lambda^m q_{m}(\xi)\bigr)\vert_{_{\xi\in E}}\geq 0
\end{equation}
admits only the trivial solution $\xi=0$.
\end{itemize}

In the setting of Section \ref{seproo}, $n=15$ and $m=12$.

\vskip .2cm
For the configuration $C_{6}\left(  \varphi_{\mathfrak{m}},\delta_{\mathfrak{m}},\varkappa_{\mathfrak{m}}\right)$ the properties (A) and (B) hold. The only relation
between the differentials is the relation (\ref{concomconfC}); this is the property (A). The property (B) follows since the form (\ref{negPhi}) is negatively defined.

\vskip .2cm
We note that if the linear space, spanned by the linear forms $l_{u}(\xi)$, $u=1,\dots,m$, is $(m-1)$-dimensional
then the implication (\ref{mima1}) is satisfied if and only if the unique linear dependency between the differentials is strictly convex.

\begin{theorem}\label{lq2}
Under the conditions {\rm (A)} and {\rm (B)}, the origin is the strict local maximum of the function ${\sf F}(x)$. \end{theorem}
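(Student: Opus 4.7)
The plan is to argue by contradiction, combining a first-order normalization argument (which forces any `bad' direction to lie in $E$) with a second-order estimate along the convex combination $\sum\lambda^u F_u$ (which the hypotheses (A) and (B) have tailored precisely to rule out).

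Assume the origin is not a strict local maximum of $\mathsf{F}$. Then there exists a sequence $x_k\to 0$ with $x_k\neq 0$ such that $\mathsf{F}(x_k)\geq 0$, i.e.\ $F_u(x_k)\geq 0$ for every $u=1,\dots,m$. Normalize by setting $\xi_k:=x_k/\|x_k\|$ and, passing to a subsequence, assume $\xi_k\to\xi^\ast$ with $\|\xi^\ast\|=1$. Dividing the expansion (\ref{decofuf}) of $F_u(x_k)\geq 0$ by $\|x_k\|$ and sending $k\to\infty$ gives $l_u(\xi^\ast)\geq 0$ for all $u$. By implication (\ref{mima1}) from property (A), this forces $\xi^\ast\in E$.

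Now I would use property (A) to cancel the linear part on the nose: since $\lambda^u>0$ and $\sum_u\lambda^u l_u(\xi)\equiv 0$, the convex combination
\[
G(x):=\sum_{u=1}^{m}\lambda^u F_u(x)=\Bigl(\sum_{u=1}^{m}\lambda^u q_u(\xi)\Bigr)\Big|_{\xi=x}+o(\|x\|^2)
\]
has identically vanishing linear part. Since $F_u(x_k)\geq 0$ and $\lambda^u>0$, we have $G(x_k)\geq 0$. Dividing by $\|x_k\|^2$ and passing to the limit along the chosen subsequence,
\[
\sum_{u=1}^{m}\lambda^u q_u(\xi^\ast)\;\geq\;0.
\]
But $\xi^\ast\in E\setminus\{0\}$, and property (B) asserts that on $E$ the form $\sum_u\lambda^u q_u$ is strictly negative away from $0$; this contradiction closes the argument.

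The step I expect to require the most care is the first-order reduction to $E$: one has to be sure that the limiting direction $\xi^\ast$ really does satisfy $l_u(\xi^\ast)=0$ for \emph{every} $u$ simultaneously, not merely $l_u(\xi^\ast)\geq 0$. This is exactly what property (A) buys us through the strict convexity of the coefficients $\lambda^u>0$ in the relation (\ref{32}): if some $l_{u_0}(\xi^\ast)$ were strictly positive, then $\sum_u\lambda^u l_u(\xi^\ast)=0$ together with $l_u(\xi^\ast)\geq 0$ would already produce a contradiction, so all $l_u(\xi^\ast)$ vanish and $\xi^\ast\in E$. Once this bridge between the first- and second-order information is in place, the remainder is a routine continuity/compactness argument, and the toy example of Section \ref{toyex} shows that both properties (A) and (B) are genuinely used: dropping (B) leaves the horns region unrestricted, while dropping the strict convexity in (A) would allow the limit direction $\xi^\ast$ to escape $E$ altogether.
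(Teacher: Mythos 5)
Your proof is correct, and it takes a genuinely different route from both of the proofs in the paper. The paper's two arguments both proceed by geometric decomposition. The first proof uses the implicit function theorem to normalize the coordinates so that $F_i(x)=x_i$ for $i<m$, and then argues separately on the null space $E$ (where the restricted form $\mathsf q$ governs) and off it (where either some $x_i<0$ kills the minimum outright, or the substitution $x_i=z_i^2$ reduces $F_m$ to a negative-definite quadratic). The second proof builds the manifold $\mathcal{E}$ on which all $F_u$ coincide, shows via a spherical-cap/tubular-neighborhood estimate (Lemmas~\ref{lemobse} and~\ref{prele}) that $\mathsf{F}$ strictly decreases as one moves normally off $\mathcal{E}$, and then analyzes the restriction to $\mathcal{E}$ (Proposition~\ref{lq1}). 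Your argument is instead a compactness (blow-up) argument: normalize a would-be counterexample sequence $x_k$ to a limiting unit direction $\xi^\ast$, use (A) at first order to force $\xi^\ast\in E$, and then observe that the convex combination $G=\sum_u\lambda^u F_u$ has identically vanishing linear part, so dividing by $\|x_k\|^2$ and passing to the limit yields $\sum_u\lambda^u q_u(\xi^\ast)\ge0$, contradicting (B). What your route buys is economy and transparency: it sidesteps the coordinate surgery, the $x_i=z_i^2$ trick, and the tubular-neighborhood geometry entirely, and it makes the roles of strict convexity in (A) and negative definiteness in (B) completely explicit; like the paper's proofs, it only needs the $F_u$ to be $\mathcal{C}^3$, since all that is used is a uniform $o(\|x\|^2)$ remainder. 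What the paper's longer arguments buy, beyond the bare existence of a maximum, is extra quantitative output: the first proof records the linear-versus-quadratic decay rates quoted after Theorem~\ref{theorC6}, and the second establishes the uniform descent estimate (\ref{85}) off $\mathcal{E}$. Your closing sanity check against the toy example of Section~\ref{toyex} is on point: considering an arbitrary sequence (not a fixed ray) is precisely what avoids the directional pitfall there, and the toy example violates (B) (the sum of second differentials restricted to $E$ is positive, not negative), so your criterion correctly refuses to declare a maximum.
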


Below we give two proofs of Theorem \ref{lq2}. These proofs are different in nature. We present both of them because of the importance of Theorem
\ref{lq2} for our conclusion about the local maximality of the configuration $C_{6}\left(  \varphi_{\mathfrak{m}},\delta_{\mathfrak{m}},\varkappa_{\mathfrak{m}}\right)$.

\vskip .2cm
Although Theorem \ref{lq2} is formulated for analytic functions, any of the two proofs show that
Theorem \ref{lq2} holds in fact for functions $F_j$ of the class $\mathcal{C}^{3}$.

\subsection{First proof of Theorem \ref{lq2}\vspace{.1cm}}\label{formsC6}

\begin{lemma}\label{inchava}
The conditions {\rm (A)} and {\rm (B)} are invariant under an arbitrary analytic change of variables, preserving the origin,
\begin{equation}\label{chavar}x^j=A^j_k\tilde{x}^k+A^j_{kl}\tilde{x}^k\tilde{x}^l+o(2)\ .\end{equation}
Here the matrix $A^j_k$ is non-degenerate.\end{lemma}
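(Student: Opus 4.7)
The plan is to compute the Taylor coefficients of $\tilde{F}_u(\tilde{x}):=F_u\bigl(x(\tilde{x})\bigr)$ up to order two, observe that the correction introduced into the quadratic part by the nonlinear piece of the change of variables is killed precisely by the convex combination $\lambda$ from condition (A), and then read off the invariance of both conditions.

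More concretely, I would first substitute $x^j=A^j_k\tilde{x}^k+A^j_{kl}\tilde{x}^k\tilde{x}^l+o(2)$ into (\ref{decofuf}). Collecting terms, the new linear form is $\tilde{l}_u(\tilde{\xi})=l_u(A\tilde{\xi})$, and the new quadratic form is
\begin{equation}
\tilde{q}_u(\tilde{\xi})=q_u(A\tilde{\xi})+l_{uj}A^j_{kl}\tilde{\xi}^k\tilde{\xi}^l\ .
\end{equation}
Since $A$ is nondegenerate, the composition $l_u\mapsto l_u\circ A$ is a linear automorphism of the dual space; hence it preserves the dimension of the span of $\{l_u\}_{u=1}^m$ (so it stays $(m-1)$-dimensional) and transports any linear dependency among the $l_u$ into the same dependency with the same coefficients $\lambda^u$ among the $\tilde{l}_u$. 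In particular strict convexity $\lambda^u>0$ is preserved, and (A) holds for the family $\tilde{F}_u$. Moreover $\tilde{E}:=\bigcap_u\ker\tilde{l}_u=A^{-1}E$.

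The key point is the behaviour of (B). Forming the convex combination of the transformed quadratic forms,
\begin{equation}
\sum_{u=1}^{m}\lambda^u\tilde{q}_u(\tilde{\xi})=\sum_{u=1}^{m}\lambda^u q_u(A\tilde{\xi})+\Bigl(\sum_{u=1}^{m}\lambda^u l_{uj}\Bigr)A^j_{kl}\tilde{\xi}^k\tilde{\xi}^l\ ,
\end{equation}
the second term vanishes identically by condition (A), that is, by (\ref{32}). Therefore, for $\tilde{\xi}\in\tilde{E}$, setting $\xi:=A\tilde{\xi}\in E$, we have $\sum_u\lambda^u\tilde{q}_u(\tilde{\xi})=\sum_u\lambda^u q_u(\xi)$, and the inequality $\sum_u\lambda^u\tilde{q}_u(\tilde{\xi})\geq 0$ with $\tilde{\xi}\in\tilde{E}$ has only the trivial solution iff the same holds for $\xi\in E$; thus (B) is preserved.

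I do not foresee a real obstacle here; the whole content is the bookkeeping observation that the nonlinear piece $A^j_{kl}$ of the change of variables affects each $\tilde{q}_u$ only through the linear coefficients $l_{uj}$, so it is annihilated automatically by the very convex combination singled out in (A). The only thing that must be verified explicitly is that no other terms of order $\leq 2$ are missed — for instance the contribution of the $o(2)$ tail of the substitution to the quadratic part of $\tilde{F}_u$ is zero because $F_u$ vanishes at the origin and is analytic.
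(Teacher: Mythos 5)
Your proof is correct and follows the same route as the paper: you substitute the change of variables, read off $\tilde{l}_{uj}=l_{ui}A^i_j$ and $\tilde{q}_{ukl}=l_{uj}A^j_{kl}+q_{uij}A^i_kA^j_l$, observe that (A) is immediate under the linear automorphism, and note that the extra term $l_{uj}A^j_{kl}$ in the quadratic part is annihilated by the convex combination because $\sum_u\lambda^u l_{uj}=0$. The paper states this very compactly; your write-up simply spells out the same observation.
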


\begin{proof} Substituting (\ref{chavar}) into the decompositions (\ref{decofuf}) we find
$$\begin{array}{rcl}F_u(x)&=&
l_{uj}\left(A^j_k\tilde{x}^k+A^j_{kl}\tilde{x}^k\tilde{x}^l\right)+q_{uij}A^i_kA^j_l\tilde{x}^k\tilde{x}^l+o(2)\\[1em]
&=&\tilde{l}_{uk}\tilde{x}^k+\tilde{q}_{ukl}\tilde{x}^k \tilde{x}^l+o(2)\ ,
\end{array}$$
where
$$\tilde{l}_{uj}=l_{uj}A^j_k\ \ \text{and}\ \ \tilde{q}_{ukl}=l_{uj}A^j_{kl}+q_{uij}A^i_kA^j_l\ .$$
The assertion is immediate for the condition (A).
As for the condition (B), it is enough to note that $\lambda^1 l_{1j}+\dots+\lambda^m l_{mj}=0$, $j=1,\dots,n$.
\end{proof}

\vskip .2cm
\noindent First {\bf Proof} of Theorem \ref{lq2}. It follows from the condition (A) that the differentials of the functions $F_1(x),\dots,F_{m-1}(x)$ are
independent; by using the implicit function theorem we change the variables to have
$$F_1(x)=x_1\ ,\ \dots \ ,\ F_{m-1}(x)=x_{m-1}\ .$$
To make the notation lighter we use the same letters $x_j$ instead of $\tilde{x}_j$.

\vskip .2cm
We identify the tangent subspace $E\subset T_0\mathbb{R}^n$ with the
plane $x_1=\ldots=x_{m-1}=0$.

\vskip .2cm
The remaining function $F_{m}(x)$ is
$$F_{m}(x)=-\,\frac{1}{\lambda^m}\,\sum_{i=1}^{m-1}\lambda^i x_i+q(x)+o(2)\ ,$$
where $q(x)$ is a quadratic form. Let
$${\sf q}(x_m,\ldots ,x_n):=q(x)\,\rule[-.22cm]{0.1mm}{.54cm}_{\; x_1=\ldots=x_{m-1}=0}$$
be the restriction of the quadratic form $q(x)$ to $E$.

\vskip .2cm
We consider separately two cases: (i) $x\in E$ and (ii) $x\notin E$.

\vskip .2cm
(i) The property (B) refers in our situation to the quadratic form ${\sf q}$.
Due to Lemma \ref{inchava}, the form ${\sf q}$ is negatively defined on $E$. Thus there exists a small neighborhood $V$ of the origin in $E$ such that
if $x\in V\setminus \{0\}$ then $F_m(x)<0$, so ${\sf F}\left(  x\right)  =\min\left\{  0,F_{m}\left(x\right)  \right\}<0$.

\vskip .2cm
(ii) If $U$ is a small enough neighborhood of the origin in $\mathbb{R}^n$ (in the chosen coordinate system the smallness depends only on the coefficients of the function $F_m(x)$)
and $x\notin U\cap E$ then the property (\ref{mima1}) implies that there exists $j$, $1\leq j\leq m$, such that $l_j(x)<0$. We have two subcases.

\vskip .2cm
(ii)$_1$ If there is a $j$, $1\leq j< m$, such that $l_j(x)<0$ then $F_j(x)=l_j(x)<0$ hence ${\sf F}\left(  x\right)<0$.

\vskip .2cm
(ii)$_2$ Otherwise, we have that all $x_i\geq 0$, $i=1,\dots, m-1$. We set
$x_i=z_i^2$, $i=1,\ldots , m-1$. In terms of the variables $z_1,\dots,z_{m-1},x_m,\dots$, the function $F_m(x)$ has the form
$$F_m(z_1^2,...\, ,z_{m-1}^2,x_m,... )=-\frac{1}{\lambda^m}\sum_{i=1}^{m-1}\lambda^i z_i^2+{\sf q}(x_m,...\, ,x_n)+\text{higher order terms}\ .$$
The quadratic form $-\frac{1}{\lambda^m}\sum_{i=1}^{m-1}\lambda^i z_i^2+{\sf q}(x_m,\ldots ,x_n)$ is negatively defined, so the function $F_m(z_1^2,\dots,z_{m-1}^2,x_m,\dots)$ is strictly less than zero in a
punctured neighborhood of the origin. This implies that the function $F_m(x)$ is strictly negative whenever all $x_i\geq 0$, $i=1,\dots, m-1$, and at least
one of them is positive. Thus again ${\sf F}\left(  x\right)<0$. \hspace{.15cm}\myblacksquare
\paragraph{Remark.} It follows from the proof that the function ${\sf F}(x)$ decays quadratically at zero along
any direction in $E$ and decays linearly along any direction outside $E$.

\subsection{Second proof of Theorem \ref{lq2}\vspace{.15cm}}\label{C6anopr}
The cornerstone  of the second proof is the set 
\begin{equation}
\mathcal{E}=\left\{  x\in\mathbb{R}^{n}:F_{1}\left(  x\right)  =\ldots
=F_{m}\left(  x\right)  \right\}  \label{91}\ .
\end{equation}

We assume that all occuring real vector spaces are equipped with a Euclidean structure. For a vector $v$ we denote
by $\hat{v}$ the unit vector in the direction of the vector $v$.

\vskip .2cm
Our proof will use the following observation.

\begin{lemma}\label{lemobse}
Let $\underline{\lambda}=\{\lambda^{1},\ldots ,\lambda^{m}\}$ be a collection of $m$ positive real numbers, $\lambda^{j}>0$, $j=1,\dots,m$.
Let $\mathcal{W}_{\underline{\lambda}}$
be the space of $m$-tuples $\{ v_{1},\ldots ,v_{m}\}$ of
vectors in $\mathbb{R}^{m-1}$, generating the space $\mathbb{R}^{m-1}$ and such that
\begin{equation}\lambda^{1}v_{1}+\ldots +\lambda^{m}v_{m}=0\ .\label{colire}\end{equation}
Then there exists a continuous positive-valued function $\delta\colon \mathcal{W}_{\underline{\lambda}}\to\mathbb{R}_{>0}$
such that for any unit vector ${\sf s}\in\mathbb{R}^{m-1}$  we have
\begin{equation}
\min_{i}\left\langle {\sf s},\hat{v}_{i}\right\rangle <-\delta\left(  v_{1},\dots ,v_{m}
\right)  .\label{111}\end{equation}
\end{lemma}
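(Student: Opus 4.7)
The plan is to deduce the lemma from two ingredients: a one-line convexity argument that rules out the ``all non-negative'' case pointwise in ${\sf s}$, and a compactness / joint-continuity argument that promotes the pointwise statement into a continuous positive function $\delta$. First I would establish the pointwise strict inequality: for a fixed tuple $(v_1,\ldots,v_m)\in\mathcal{W}_{\underline{\lambda}}$ and any unit vector ${\sf s}\in\mathbb{R}^{m-1}$, one has $\min_i\langle{\sf s},\hat{v}_i\rangle<0$. Indeed, if $\langle{\sf s},\hat{v}_i\rangle\geq 0$ for every $i$ then, since $v_i$ and $\hat{v}_i$ are positive scalar multiples of one another, $\langle{\sf s},v_i\rangle\geq 0$ as well; plugging this into
$$0=\bigl\langle{\sf s},\,\lambda^{1} v_{1}+\ldots+\lambda^{m} v_{m}\bigr\rangle=\sum_{i=1}^{m}\lambda^{i}\langle{\sf s},v_i\rangle$$
together with all $\lambda^{i}>0$ forces $\langle{\sf s},v_i\rangle=0$ for every $i$. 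Since the $v_i$'s span $\mathbb{R}^{m-1}$, this yields ${\sf s}=0$, contradicting $\|{\sf s}\|=1$.

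Second, I would promote this pointwise statement to a uniform quantitative bound and use it to define $\delta$. Set
$$g(v_1,\ldots,v_m)\,:=\,-\max_{\|{\sf s}\|=1}\,\min_{1\leq i\leq m}\langle{\sf s},\hat{v}_i\rangle\ ;$$
the unit sphere $S^{m-2}\subset\mathbb{R}^{m-1}$ is compact so the maximum is attained, and the first step guarantees $g(v_1,\ldots,v_m)>0$. Put $\delta(v_1,\ldots,v_m):=\tfrac{1}{2}g(v_1,\ldots,v_m)$. By construction $\min_i\langle{\sf s},\hat{v}_i\rangle\leq -g<-\delta$ for every unit ${\sf s}$, which is precisely the required inequality (\ref{111}).

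The only remaining point is the continuity of $\delta$, and this is where I expect to have to be careful. A preliminary observation removes the potential difficulty: no $v_j$ can vanish on $\mathcal{W}_{\underline{\lambda}}$, for if $v_j=0$ then the remaining $m-1$ vectors would satisfy the non-trivial linear relation $\sum_{i\neq j}\lambda^{i} v_i=0$ and hence could not span $\mathbb{R}^{m-1}$. Consequently the normalization $v_i\mapsto\hat{v}_i$ is continuous on all of $\mathcal{W}_{\underline{\lambda}}$, so the joint function $(v_1,\ldots,v_m,{\sf s})\mapsto\min_i\langle{\sf s},\hat{v}_i\rangle$ is continuous on $\mathcal{W}_{\underline{\lambda}}\times S^{m-2}$. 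A standard max-over-a-compact-parameter argument then gives the continuity of $g$, hence of $\delta$, on $\mathcal{W}_{\underline{\lambda}}$. The proof is thus essentially compactness plus a one-line convex-combination computation; the subtle part is only the verification that $\hat{v}_i$ remains well defined throughout $\mathcal{W}_{\underline{\lambda}}$, so that all quantities depend continuously on the tuple.
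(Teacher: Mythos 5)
Your proof is correct and takes essentially the same route as the paper's: the same pointwise observation (dot the relation $\sum\lambda^i v_i=0$ with $\mathsf{s}$ to force some $\langle\mathsf{s},v_i\rangle<0$) followed by compactness of the unit sphere to obtain a uniform, continuously varying margin; you phrase the bound directly in terms of cosines, whereas the paper packages the identical argument in the language of spherical caps and angles. The one point you spell out that the paper leaves implicit --- that no $v_j$ can vanish on $\mathcal{W}_{\underline{\lambda}}$, so the normalization $v_i\mapsto\hat v_i$ is continuous --- is a worthwhile addition.
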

\begin{proof}
For an angle $\alpha$, $0\leq\alpha <\pi$, let $D_{j}\left(  \alpha\right)$, $j=1,\dots ,m$, denote the
open spherical cap, centered at ($-\hat{v}_{j}$), on the unit
sphere $\mathbb{S}^{m-2}$, consisting of all the points ${\sf s}\in\mathbb{S}^{m-2}$
such that the angle $\measuredangle\left(  {\sf s},\hat{v}_{j}\right)  >\alpha$.

\vskip .2cm
For any unit vector ${\sf s}$ there exists an index $i$ such that $\left\langle {\sf s},v_{i}\right\rangle <0$.
Indeed, since the vectors $v_{1},\ldots ,v_{m}$ span the whole space $\mathbb{R}^{m-1}$,
some of the scalar products $\left\langle {\sf s},v_{j}\right\rangle$, $j=1,\dots ,m$, are nonzero.
Taking the scalar product of the relation (\ref{colire}) with the
vector ${\sf s}$ we see that at least one of the scalar products $\left\langle {\sf s},v_{i}\right\rangle $
has to be negative. Therefore
\[ \bigcup_{i=1}^{m}\, D_{i}\left(  \frac{\pi}{2}\right)  =\mathbb{S}^{m-2}\ .\]
Thus,
$$\alpha_{0}\left(  v_{1},\ldots,v_{m}\right)  >\frac{\pi}{2}\ ,$$
where the function $\alpha_{0}\left(  v_{1},\ldots,v_{m}\right)$ is defined by
\[\alpha_{0}\left(  v_{1},\ldots,v_{m}\right)  =\sup\left\{  \alpha
:\bigcup_{i=1}^{m}D_{i}\left(  \alpha\right)  =\mathbb{S}^{m-2}\right\}\ .\]
Let
\[\bar{\alpha}\left(  v_{1},\ldots,v_{m}\right)  :=\frac{1}{2}\left[  \alpha
_{0}\left(  v_{1},\ldots,v_{m}\right)  +\frac{\pi}{2}\right]  \ .\]
Clearly, $\bigcup_{i=1}^{m} D_{i}\left(  \bar{\alpha}\right)  =\mathbb{S}^{m-2}$.
Define the function $\delta$ by
\[
\delta\left(  v_{1},\ldots,v_{m}\right)  =-\cos\bar{\alpha}\left(
v_{1},\ldots,v_{m}\right)  .
\]
With this choice of the function $\delta$ the relation $\left(  \ref{111}\right)  $ clearly
holds. The positivity and the continuity of the function $\delta$ are straightforward.
\end{proof}

\vskip .2cm
We return to the consideration of our analytic functions.

\begin{lemma}
\label{prele} If the point $y\in\mathbb{R}^{n}$ happens to be away
from the set $\mathcal{E}$, see (\ref{91}), 
and the norm $\left\Vert y\right\Vert $ is small enough then one can find a
point $x$ on $\mathcal{E}$ such that $\mathsf{F}\left(  y\right)
<\mathsf{F}\left(  x\right)  .$

\vskip .2cm
Moreover, there exists a constant $c$ such that for $y\notin\mathcal{E},$ and
$x=x\left(  y\right)  \in\mathcal{E}$ being the point in $\mathcal{E}$ closest to $y$ we have
\begin{equation}
\mathsf{F}\left(  y\right)  <\mathsf{F}\left(  x\right)  -c\left\Vert
x-y\right\Vert ,\label{85}
\end{equation}
provided, again, that  the norm $\left\Vert y\right\Vert $ is small enough.
\end{lemma}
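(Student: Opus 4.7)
The plan is to combine Lemma \ref{lemobse}, applied in the normal direction to $\mathcal{E}$ at $x(y)$, with a first-order Taylor expansion. Condition (A) says that the differentials of the $m-1$ defining equations $F_j-F_m=0$, $j=1,\dots,m-1$, of $\mathcal{E}$ are linearly independent at the origin, so by the analytic implicit function theorem $\mathcal{E}$ is an analytic submanifold of $\mathbb{R}^n$ of dimension $n-m+1$ in a neighbourhood of $0$. For $y$ small, the nearest point $x=x(y)\in\mathcal{E}$ is therefore uniquely defined, depends smoothly on $y$, tends to $0$ with $y$, and satisfies $y-x\perp T_x\mathcal{E}$.

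The next step is to observe that every linear form $l_u$ lies in the normal space $N_0=(T_0\mathcal{E})^{\perp}$, and that $l_1,\dots,l_m$ together span $N_0$. Indeed, since $F_1,\dots,F_m$ coincide on $\mathcal{E}$, the restrictions of $l_1,\dots,l_m$ to $T_0\mathcal{E}$ all agree with some common linear form $l_T$; plugging into the relation $\sum_u\lambda^u l_u=0$ and using $\sum_u\lambda^u>0$ forces $l_T=0$. Hence each $l_u\in N_0$, and the spanning follows from $\dim N_0=m-1$ together with condition (A). Lemma \ref{lemobse}, applied to the tuple $(l_1,\dots,l_m)$ viewed inside $N_0\cong\mathbb{R}^{m-1}$, thus yields a constant $\delta_0>0$ such that every unit vector $\sigma\in N_0$ satisfies $\min_i\langle\sigma,\hat l_i\rangle<-\delta_0$.

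For $y\notin\mathcal{E}$ close to $0$, set ${\sf s}=(y-x)/\|y-x\|\in N_x$, let ${\sf s}_0$ be its orthogonal projection onto $N_0$, and pick $i=i(y)$ to be the index for which the lemma gives $\langle\widehat{{\sf s}_0},\hat l_i\rangle<-\delta_0$. Smooth variation of the normal bundle implies ${\sf s}_0\to{\sf s}$ as $y\to 0$, so by continuity of the inner product $\langle{\sf s},\hat l_i\rangle<-\delta_0/2$ once $\|y\|$ is sufficiently small. A first-order Taylor expansion of $F_i$ at $x$,
\begin{equation*}
F_i(y)=F_i(x)+\bigl(l_i+O(\|x\|)\bigr)\cdot(y-x)+O(\|y-x\|^2),
\end{equation*}
together with $F_i(x)=\mathsf{F}(x)$, then yields, for $\|y\|$ small enough,
\begin{equation*}
\mathsf{F}(y)\le F_i(y)\le \mathsf{F}(x)-c\,\|y-x\|,\qquad c:=\tfrac{\delta_0}{4}\min_j\|l_j\|>0,
\end{equation*}
which is exactly (\ref{85}); the first assertion of the lemma is an immediate consequence.

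The main technical point I expect to spend care on is the uniformity of the constant $c$. The index $i$ produced by Lemma \ref{lemobse} depends on the direction ${\sf s}$, hence on $y$, but since it ranges over the finite set $\{1,\dots,m\}$, the uniform lower bound $\min_j\|l_j\|$ gives a $y$-independent $c$. A secondary subtlety is that ${\sf s}$ is normal to $\mathcal{E}$ at $x(y)$, not at the origin; this is resolved by replacing ${\sf s}$ by its projection ${\sf s}_0$ onto $N_0$ and using that $N_x$ depends continuously on $x\to 0$, which ensures that both the choice of $i$ and the strict sign of $\langle {\sf s},\hat l_i\rangle$ persist under the perturbation.
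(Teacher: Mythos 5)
Your argument is correct and reaches the same conclusion, but the way you handle the family of normal spaces is genuinely different from the paper's. The paper applies Lemma \ref{lemobse} at \emph{every} base point $x\in\mathcal{E}_{r'}$, viewing the restricted functionals $l_j|_{N_x}$ as vectors $v_j^x\in N_x$, and then checks that the resulting $\delta$-functions can be chosen continuously and uniformly bounded below, which directly produces a uniform constant $c$ on the whole tubular neighborhood. You instead apply the lemma only once, at $x=0$, and handle the drift of $N_x$ away from $N_0$ by projecting the normal direction ${\sf s}$ onto $N_0$ and invoking continuity of the normal bundle to keep $\|{\sf s}-\hat{{\sf s}}_0\|<\delta_0/2$ for $\|y\|$ small. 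Your route is slicker in that it avoids discussing uniform positivity of a family of $\delta^x$'s, but it quietly uses the same continuity of $x\mapsto N_x$ that the paper makes explicit, and it requires the extra (easy) observation that each $l_u$ already lies in $N_0$, which you justify correctly from $T_0\mathcal{E}=E$ and the convex relation. One small point worth spelling out: your constant $c=\tfrac{\delta_0}{4}\min_j\|l_j\|$ is positive because no $l_j$ can vanish under condition (A) — if some $l_j=0$, the trivial relation supported on $j$ alone would be a second, non-proportional linear dependency. With that noted, the estimate $\mathsf{F}(y)\le F_i(y)<\mathsf{F}(x)-c\|y-x\|$ follows exactly as you write, using $F_i(x)=\mathsf{F}(x)$ on $\mathcal{E}$, and the first assertion of the lemma is indeed immediate.
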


\begin{proof}
Since there is only one linear dependency between the differentials $l_{1},\dots,l_{m}$ of the
functions $F_{1}(x),\dots,F_{m}(x)$, the set $\mathcal{E}$ is a smooth
manifold in a vicinity of the origin, of dimension $n-m+1$.

\vskip .2cm  We introduce the tubular neighborhood $U_{r}\left(
\mathcal{E}\right)  $ of the manifold $\mathcal{E}$, which is comprised by all
points $y$ of $\mathbb{R}^{n}$ which can be represented as $\left(
x,\mathsf{s}_{x}\right)  ,$ where $x\in\mathcal{E}$ and $\mathsf{s}_{x}$ is a
vector normal to $\mathcal{E}$ at $x,$ with norm less than $r.$  Let
$\mathcal{E}_{r^{\prime}}\subset\mathcal{E}$ be the neighborhood of the origin
in $\mathcal{E}$, comprised by all $x\in\mathcal{E}$ with norm $\left\Vert
x\right\Vert <r^{\prime},$ and $U_{r}\left(  \mathcal{E}_{r^{\prime}}\right)
$ be the part of $U_{r}\left(  \mathcal{E}\right)  $ formed by points hanging
over $\mathcal{E}_{r^{\prime}}.$ If both $r$ and $r^{\prime}$ are small enough
then every $y\in U_{r}\left(  \mathcal{E}_{r^{\prime}}\right)  $ can be
written as $\left(  x,\mathsf{s}_{x}\right)  $ with $x\in\mathcal{E}
_{r^{\prime}}$ in a unique way. Note that $x$ is the point
on $\mathcal{E}$ closest to $y$. Also, for any $r,r^{\prime}>0$ the set
$U_{r}\left(  \mathcal{E}_{r^{\prime}}\right)  $ evidently contains an open
neighborhood of the origin.

\vskip .2cm Now we are going to show that if $y=\left(  x,\mathsf{s}
_{x}\right)  \in U_{r}\left(  \mathcal{E}_{r^{\prime}}\right)  ,$
$\mathsf{s}_{x}\neq0,$ and both $r$ and $r^{\prime}$ are small enough then
$\mathsf{F}\left(  y\right)  <\mathsf{F}\left(  x\right) $. To this end, let
$N_{x}$ be the plane normal to $\mathcal{E}$ at $x$ (so that $\mathsf{s}
_{x}\in N_{x}$). We identify $N_{x}$ with the linear space $\mathbb{R}^{m-1},$
so that $x$ corresponds to $0\in\mathbb{R}^{m-1}$.

\vskip .2cm Now we will use Lemma \ref{lemobse}, applied not to a single
space, but to the whole collection of the $\left(  m-1\right) $-dimensional
spaces $N_{x},$ $x\in\mathcal{E}_{r^{\prime}}.$ To do this, we equip each
$N_{x}$ with $m$ vectors $v_{1}^{x},\ldots,v_{m}^{x}\in N_{x},$ which generate
$N_{x}$ and which satisfy the same convex linear relation. All this data is
readily supplied by the linear functionals $l_{1},\ldots,l_{m},$ restricted to
$N_{x}.$ Indeed, each restricted functional $l_{j}^{x}\equiv l_{j}{ |}_{N_{x}
}$ can be uniquely written as $l_{j}^{x}\left(  \ast\right)  =\left\langle
\ast,v_{j}^{x}\right\rangle ,$ with $v_{j}^{x}\in N_{x}.$ Here the scalar
product on $N_{x}$ is the one restricted from $\mathbb{R}^{n}.$ Clearly, for
every $x$ we have
\[
\lambda^{1}v_{1}^{x}+\ldots+\lambda^{m}v_{m}^{x}=0\ ,
\]
since for every vector $\mathsf{s}\in N_{x}$ we have $\lambda^{1}l_{1}\left(
\mathsf{s}\right)  +\ldots+\lambda^{m}l_{m}\left(  \mathsf{s}\right)  =0$ (as
for any other vector). Moreover, $l_{j}\left(  \mathsf{s}\right) <0$ for some
$j=j(\mathsf{s})$, $1\leq j\leq m$, see formula (\ref{mima1}) or the proof of
Lemma \ref{lemobse}.

\vskip.2cm Since the space $N_{x=0}$ is orthogonal to the null-space $E$, the
$m$ vectors $v_{1}^{0},\ldots,v_{m}^{0}$ do generate $N_{0}$. Because the
spaces $N_{x}$ depend on $x$ continuously, all of them are transversal to $E$,
provided $r^{\prime}$ is small. Thus, the vectors $v_{1}^{x},\ldots,v_{m}^{x}$
do generate the spaces $N_{x}$ for all $x\in\mathcal{E}_{r^{\prime}},$
provided again that $r^{\prime}$ is small enough. Lemma \ref{lemobse} provides
us now with a collection of functions $\delta^{x}$ on the spaces
$\mathcal{W}_{\underline{\lambda}}^{x}$ of $m$-tuples of vectors from $N_{x}.$
It follows from the continuity, in $x$, of the spaces $N_{x}$ and the
$m$-tuples $\{v_{1}^{x},\ldots,v_{m}^{x}\}$, and from the Lemma \ref{lemobse}
that the functions $\delta^{x}$ can be chosen in such a way that the resulting
positive function $\Delta(x):=\delta^{x}\left(  v_{1}^{x},\ldots,v_{m}
^{x}\right)  $ on $\mathcal{E}_{r^{\prime}}$ is continuous in $x$ and also is
uniformly positive, that is,
\[
\Delta(x)>2c\text{ for all }x\in\mathcal{E}_{r^{\prime}}\ ,
\]
for some $c>0$, provided $r^{\prime}$ is small enough.

\vskip.2cm In virtue of Lemma \ref{lemobse}, for every $x\in\mathcal{E}
_{r^{\prime}}$ and each vector $\mathsf{s}\in N_{x}$ there exists an index
$j\left(  \mathsf{s}\right)  $ for which the value $l_{j\left(  \mathsf{s}
\right)  }\left(  \mathsf{s}\right)  $ of the functional $l_{j\left(
\mathsf{s}\right)  }$ is not only negative but moreover satisfies
\begin{equation}
l_{j\left(  \mathsf{s}\right)  }\left(  \mathsf{s}\right)  <-2c\left\Vert
\mathsf{s}\right\Vert .\label{86}
\end{equation}
Hence for $y=\left(  x,\mathsf{s}_{x}\right)  \in U_{r}\left(  \mathcal{E}
_{r^{\prime}}\right)  $ we have
\begin{equation}
F_{j\left(  \mathsf{s}_{x}\right)  }\left(  y\right)  <F_{j\left(
\mathsf{s}_{x}\right)  }\left(  x\right)  -c\left\Vert \mathsf{s}
_{x}\right\Vert \label{186}
\end{equation}
provided both $r$ and $r^{\prime}$ are small. Therefore
\[
\min_{j}\left\{  F_{j}\left(  y\right)  \right\}  \leq F_{j\left(
\mathsf{s}_{x}\right)  }\left(  y\right)  <F_{j\left(  \mathsf{s}_{x}\right)
}\left(  x\right)  -c\left\Vert \mathsf{s}_{x}\right\Vert =\min_{j}\left\{
F_{j}\left(  x\right)  \right\}  -c\left\Vert \mathsf{s}_{x}\right\Vert \ ,
\]
where the last equality holds since $F_{1}\left(  x\right)  =\ldots
=F_{m}\left(  x\right)  $, so we are done.
\end{proof}

\vskip .2cm
Theorem \ref{lq2} is a straightforward consequence of the next Proposition.

\begin{proposition}
\label{lq1} The point $x=0$ is a sharp local maximum of the function
$\mathsf{F}$ \emph{if} the form
\begin{equation}
\sum_{u=1}^{m}\,\lambda^{u}q_{u}\label{10}
\end{equation}
is negative definite on $E$.

\vskip .2cm
In the special case when all the functions $F_u(x)$, $u=1,\dots,m$, are linear-quadratic,
$F_u$ are sums of linear and quadratic forms, 
\begin{equation}F_u(x)=l_{uj}x^j+q_{ujk}x^j x^k\ ,\label{linquadfun}\end{equation}
the \emph{if} statement becomes the \emph{iff} statement.
\end{proposition}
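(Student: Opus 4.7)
The plan is to leverage Lemma \ref{prele}, which already settles the transverse directions: for $y$ off the set $\mathcal{E}=\{F_1=\cdots=F_m\}$ one has ${\sf F}(y)<{\sf F}(x(y))$ with $x(y)\in\mathcal{E}$ the nearest point. Consequently it suffices to verify ${\sf F}(x)<0$ for $x\in\mathcal{E}\setminus\{0\}$ close enough to the origin; the strict inequality from Lemma \ref{prele} then promotes this to ${\sf F}(y)<0$ for every small $y\neq 0$, whether or not $y\in\mathcal{E}$.

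For the ``if'' direction I would first check, using (A) and the implicit function theorem, that $\mathcal{E}$ is a smooth submanifold of dimension $n-m+1$ near $0$ whose tangent space at $0$ equals $E$: the $m-1$ differentials $l_1-l_u$ are linearly independent, their common kernel is the codimension-$(m-1)$ subspace on which $l_1=\cdots=l_m$, and the convex relation $\sum_u\lambda^u l_u=0$ with $\lambda^u>0$ forces that common value to be $0$, yielding exactly $E$. Since ${\sf F}(x)=F_u(x)$ for every $u$ whenever $x\in\mathcal{E}$, condition (A) then delivers the key identity
\[
\Bigl(\sum_{u=1}^{m}\lambda^{u}\Bigr)\,{\sf F}(x)=\sum_{u=1}^{m}\lambda^{u}F_{u}(x)=\sum_{u=1}^{m}\lambda^{u}q_{u}(x)+o(\|x\|^{2})
\]
on $\mathcal{E}$, the linear parts having cancelled. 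Parametrising $\mathcal{E}$ by $\xi\in E$ with $x=\xi+O(\|\xi\|^2)$, the right-hand side equals $\sum_u\lambda^u q_u(\xi)+O(\|\xi\|^3)$, which is bounded above by $-c\|\xi\|^2$ for small $\xi\neq 0$ once the form is negative definite on $E$, producing ${\sf F}(x)<0$ as required.

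For the converse in the linear-quadratic case I would argue contrapositively. If $\sum_u\lambda^u q_u|_E$ is not negative definite, pick $\xi_0\in E\setminus\{0\}$ with $\sum_u\lambda^u q_u(\xi_0)\geq 0$ and take a smooth curve $x(t)\subset\mathcal{E}$ with $\dot{x}(0)=\xi_0$; the identity above yields $(\sum_u\lambda^u){\sf F}(x(t))=t^2\sum_u\lambda^u q_u(\xi_0)+O(t^3)$, strictly positive for small $t\neq 0$ whenever the inequality is strict. The borderline case $\sum_u\lambda^u q_u(\xi_0)=0$ splits further: either $\xi_0$ can be perturbed inside $E$ into the strictly positive region, reducing to the previous case; or $\xi_0$ lies in the radical of the form on $E$, in which case the linear-quadratic rigidity of the $F_u$ produces an explicit curve in $\mathcal{E}$ (a branch of the intersection of quadrics $\{F_1=\cdots=F_m\}$ tangent to $\xi_0$ at $0$) along which ${\sf F}$ is identically zero, contradicting sharpness.

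The main delicate point I expect is precisely this boundary-case analysis for the converse, where one must argue that every degenerate direction of the form really gives rise to a curve on $\mathcal{E}$ violating sharp maximality, using essentially the fact that in the linear-quadratic setting there are no cubic or higher analytic corrections to rescue the strict inequality. The ``if'' direction, in contrast, is essentially immediate once Lemma \ref{prele} and the identification $T_0\mathcal{E}=E$ are in hand.
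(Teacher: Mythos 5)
Your proof of the \emph{if} direction is correct and follows essentially the same route as the paper's: reduce to $\mathcal{E}$ via Lemma~\ref{prele}, then control the second-order behaviour of the common restriction by $\sum_u\lambda^u q_u$ restricted to $E$. Where the paper differentiates $F_i(\mathbf{x},\mathbf{z}(\mathbf{x}))$ twice, introduces the auxiliary vector $\Delta$, and then weights the $m$ resulting identities by the $\lambda^l$ and invokes (\ref{32}) to cancel the $l_l(\Delta)$ contributions, you write $\bigl(\sum_u\lambda^u\bigr)\,\mathsf{F}=\sum_u\lambda^u F_u$ on $\mathcal{E}$ and observe that (\ref{32}) kills the linear part on the nose. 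These two computations say the same thing; yours is cleaner and avoids the Hessian chain rule, so this part is a welcome streamlining rather than a different approach.

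The converse is where you have a genuine gap, and the point you flag as ``delicate'' is in fact unfixable by the route you sketch. Your claim that a direction $\xi_0$ in the radical of $\sum_u\lambda^u q_u|_E$ must produce a branch of $\mathcal{E}$ tangent to $\xi_0$ along which $\mathsf{F}$ vanishes identically is false, and so is the \emph{only if} half of the proposition as stated (with ``sharp local maximum'' meaning the strict inequality used throughout the paper). Take $n=3$, $m=2$, $\lambda^1=\lambda^2=1$, and
\[
F_1 = x_1 - x_3^2 - x_1^2 + x_1 x_2 - x_2^2\ ,\qquad F_2 = -x_1 - x_1^2 - x_1 x_2 + x_2^2\ .
\]
Then $l_1=x_1$, $l_2=-x_1$, the relation $l_1+l_2=0$ is strictly convex, $E=\{x_1=0\}$, and $(q_1+q_2)|_E=-\xi_3^2$ is degenerate along $\xi_0=(0,1,0)$, so condition (B) fails. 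Yet $F_1+F_2=-x_3^2-2x_1^2$ gives $\mathsf{F}(x)\leq -\tfrac12\left(x_3^2+2x_1^2\right)$, and on the remaining locus $x_1=x_3=0$ one has $F_1(0,x_2,0)=-x_2^2<0$ for $x_2\neq 0$; hence $\mathsf{F}(x)<0$ for every $x\neq 0$, so the origin \emph{is} a strict maximum. On $\mathcal{E}$ (given here by $2x_1(1+x_2)=x_3^2+2x_2^2$) the function $\mathsf{F}=-x_1^2-\tfrac12 x_3^2$ decays like $x_2^4$ in the degenerate direction and never vanishes off the origin, so there is no curve with $\mathsf{F}\equiv 0$. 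Note also that the paper's own argument only shows the Hessian of $M$ at $0$ is proportional to $\sum_u\lambda^u q_u|_E$, which yields the \emph{if} direction and negative \emph{semi}-definiteness in the other direction, not definiteness; the \emph{iff} assertion is not substantiated there either. Your instinct that the borderline case is the crux was right, but the converse cannot be rescued by exhibiting a degenerate curve, because it simply is not true.
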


\begin{proof}
In view of Lemma \ref{prele} we can restrict our search of the maximum of the
function $\mathsf{F}$ to the submanifold $\mathcal{E}$.

\vskip .2cm Note that the plane $E$ is the tangent plane to $\mathcal{E}$ at
the point $0\in\mathcal{E}$, so the coordinate projection of $\mathcal{E}$ to
$E$ introduces the local coordinates on $\mathcal{E}$ in a vicinity of $0.$ As
a result, $\mathcal{E}$ can be viewed as a graph of a function $Z$ on $E$,
$Z\left(  \mathbf{x}\right)  \in\mathbb{R}^{m-1}$ $:$
\[
\mathcal{E}=\left\{  \mathbf{x,z}:\mathbf{x}\in E,\mathbf{z}=\left(
z_{1}\left(  \mathbf{x}\right)  ,\ldots,z_{m-1}\left(  \mathbf{x}\right)
\right)  \right\}  .
\]
This is an instance of the implicit function theorem. The point $\mathbf{x}=0$
is a critical point of all the functions $z_{l}\left(  \mathbf{x}\right)  .$

\vskip .2cm Denote by $M$ the restriction of any of the functions $F_{i}$ to
$\mathcal{E}$. Clearly, it is a smooth function, and the differential $dM$
vanishes at $0\in\mathcal{E}$. So our proposition would follow once we check
that\textbf{ }the second quadratic form of $M$ at $0$ is twice the
form\textbf{ }$\left(  \ref{10}\right)  .$ To see that, let us compute the
derivative $\frac{d^{2}M}{dx_{1}^{2}}$ at the origin; the computation of other
second derivatives repeats this computation. We have
\begin{align*}
&  \frac{d}{dx_{1}}M\left(  \mathbf{x,z}\left(  \mathbf{x}\right)  \right)
=\left( \! \frac{\partial}{\partial x_{1}}M\!\right) \! \left(  \mathbf{x,z}
\left(  \mathbf{x}\right)  \right) \\
& \! +\left( \! \frac{\partial}{\partial x_{n-m+2}}M\!\right) \! \left(
\mathbf{x,z}\left(  \mathbf{x}\right)  \right)  \frac{\partial}{\partial
x_{1}}z_{1}\left(  \mathbf{x}\right)  +\ldots+\left( \! \frac{\partial
}{\partial x_{n}}M\!\right) \! \left(  \mathbf{x,z}\left(  \mathbf{x}\right)
\right)  \frac{\partial}{\partial x_{1}}z_{m-1}\left(  \mathbf{x}\right)  ,
\end{align*}
and then
\begin{align*}
\frac{d^{2}}{dx_{1}^{2}}M\left(  \mathbf{x,z}\left(  \mathbf{x}\right)
\right)  {|}_{\mathbf{x}=0}  &  =2\left[  q_{1}\right]  _{1,1}+\left[
l_{1}\right]  _{1}\cdot0\text{ (since all }\frac{\partial}{\partial x_{1}
}z_{l}\left(  \mathbf{0}\right)  =0\text{)}\\
&  +\left[  l_{1}\right]  _{n-m+2}\cdot\frac{\partial^{2}}{\partial x_{1}^{2}
}z_{1}\left(  \mathbf{0}\right)  +\ldots+\left[  l_{1}\right]  _{n}\cdot
\frac{\partial^{2}}{\partial x_{1}^{2}}z_{m-1}\left(  \mathbf{0}\right)  .
\end{align*}
Let us introduce the vector
\[
\Delta=\left(  0,\ldots,\frac{\partial^{2}}{\partial x_{1}^{2}}z_{1}\left(
\mathbf{0}\right)  ,\ldots,\frac{\partial^{2}}{\partial x_{1}^{2}}
z_{m-1}\left(  \mathbf{0}\right)  \right)  .
\]
Then we have
\[
\frac{d^{2}}{dx_{1}^{2}}M_{1}\left(  \mathbf{x,z}\left(  \mathbf{x}\right)
\right)  {|}_{\mathbf{x}=0}=2\left[  q_{1}\right]  _{1,1}+l_{1}\left(
\Delta\right)  .
\]
Since we have $m-1$ identities
\[
M_{1}\left(  \mathbf{x,z}\left(  \mathbf{x}\right)  \right)  =M_{2}\left(
\mathbf{x,z}\left(  \mathbf{x}\right)  \right)  =M_{m}\left(  \mathbf{x,z}
\left(  \mathbf{x}\right)  \right)  ,
\]
we can write also
\[
\frac{d^{2}}{dx_{1}^{2}}M\left(  \mathbf{x,z}\left(  \mathbf{x}\right)
\right)  {|}_{\mathbf{x}=0}=2\left[  q_{l}\right]  _{1,1}+l_{l}\left(
\Delta\right)  ,\ l=2,\ldots,m.
\]
By $\left(  \ref{32}\right)  $ we then have
\[
\frac{d^{2}}{dx_{1}^{2}}M\left(  \mathbf{x,z}\left(  \mathbf{x}\right)
\right)  {|}_{\mathbf{x}=0}=2\left(  \sum_{l}\lambda^{l}\left[  q_{l}\right]
_{1,1}\right)  ,
\]
so our claim follows.
\end{proof}

\subsubsection{Concluding remark}
We stress that the space
$\mathcal{E}$, on which all the functions $F_{u}\left(  x\right)$, 
$u=1,\ldots,m$, are equal, is a very natural object in the
study of a local maximum of the function $\mathsf{F}(x)$, see (\ref{deffuF}).
As a supporting evidence we provide a simple proof of a weakened form of
Lemma \ref{prele}. 

\begin{lemma}\label{silem} 
Assume that any $m-1$ differentials among $dF_j\left(  0\right)$, $u=1,\ldots,m$, are linearly independent. 
If the equalities $F_{1}\left(  y\right)  =...=F_{m}\left(
y\right)  $ are not satisfied at a point $y\in\mathbb{R}^{n}$ with small
enough $\left\Vert y\right\Vert $ then $y$ cannot be a local maximum of the
function $\mathsf{F}$.
\end{lemma}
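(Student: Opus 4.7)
The strategy is straightforward: at a point $y$ close to $0$ where the values $F_u(y)$ are not all equal, I would exhibit a tangent direction along which every currently minimizing $F_i$ strictly increases, while the values of the non-minimizing $F_j$ stay above the current minimum $\mathsf{F}(y)$. This forces $\mathsf{F}$ to increase along that direction, contradicting local maximality of $y$.

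First, I would fix such a $y$ and introduce the active set $S(y) := \{i : F_i(y) = \mathsf{F}(y)\}$. Since the values $F_u(y)$ are not all equal, $S(y)$ is a proper subset of $\{1, \ldots, m\}$, so $|S(y)| \leq m-1$, and the gap $\epsilon := \min_{j \notin S(y)} \bigl( F_j(y) - \mathsf{F}(y) \bigr)$ is strictly positive.

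The key step is the linear independence of the family $\{dF_i(y)\}_{i \in S(y)}$. By the hypothesis of the lemma, $\{dF_i(0)\}_{i \in S(y)}$ is independent, being a sub-collection of some linearly independent family of size $m-1$. Analyticity of the $F_u$ guarantees continuity of $y \mapsto dF_u(y)$, so this independence persists on a small neighborhood of $0$. Since there are only finitely many proper subsets of $\{1, \ldots, m\}$, intersecting the corresponding open neighborhoods produces a single $U \ni 0$ on which $\{dF_i(y)\}_{i \in S}$ is independent for every proper subset $S$ and every $y \in U$; this is precisely the smallness condition on $\|y\|$.

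Once this is established, a standard dual-basis argument produces a tangent vector $v$ with $dF_i(y)(v) > 0$ for every $i \in S(y)$. For $t > 0$ sufficiently small, a first-order Taylor expansion gives $F_i(y + tv) > \mathsf{F}(y)$ for each $i \in S(y)$, while continuity of the $F_j$ combined with the strict gap $\epsilon > 0$ yields $F_j(y + tv) > \mathsf{F}(y)$ for each $j \notin S(y)$. Taking the minimum gives $\mathsf{F}(y + tv) > \mathsf{F}(y)$, contradicting the assumption that $y$ is a local maximum of $\mathsf{F}$. The main obstacle is really just bookkeeping: uniformity of the neighborhood $U$ across different possible active sets $S(y)$, handled by the finiteness of the collection of proper subsets; the admissible size of the step $t$ depends on the individual $y$ through $\epsilon$ and the second-order remainder, but this is harmless because the conclusion is pointwise in $y$.
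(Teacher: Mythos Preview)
Your proof is correct and follows essentially the same approach as the paper's: identify the active indices, use linear independence of the corresponding differentials to find a direction along which all active $F_i$ increase, and conclude. You are simply more explicit than the paper about the continuity argument passing from $dF_i(0)$ to $dF_i(y)$, the uniformity of the neighborhood over all possible active sets, and the handling of the inactive indices via the gap $\epsilon$.
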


\begin{proof}
Suppose that for some $r,$ $1\leq r<m$, we have
\[
F_{1}\left(  y\right)  =...=F_{r}\left(  y\right)  <F_{r+1}\left(  y\right)
\leq...\leq F_{m}\left(  y\right) \ .
\]
Since the linear functionals $dF_{1},\ldots,dF_{r}$ are independent, there
exists a vector $v$ such that all the values $dF_{i}\left(  v\right) $,
$i=1,\ldots,r$, are positive. Therefore all the functions $N_{i}\left(
t\right)  := F_{i}\left(  y+tv\right) $, $i=1,\ldots,r$, are increasing in $t$
at $t=0,$ provided both $\left\Vert y\right\Vert $ and $t$ are small enough.
\end{proof}

\vskip .2cm In \cite{OS}, we were guided by this kind of logic in our search
of the curve $\gamma(\varphi)$: for each $\varphi$ the point $\gamma
(\varphi)\in M^{6}$ is defined by the condition that the distances under
consideration coincide.

\vskip.2cm However, Lemma \ref{silem} does not suffice for the proof of 
Proposition \ref{lq1}, as the following example shows. Let $n=m=2$ and
$$F_1(x_1,x_2)=x_2-x_1^2\ ,\ F_2(x_1,x_2)=2x_2-x_1^2\ .$$
The set $\mathcal{E}$ is the $x_1$-axis. The differential $dF_1(0)$ and $dF_2(0)$ are linearly dependent but the dependency is not convex. The restriction of any of 
functions $F_u$ on $\mathcal{E}$ is the function $-x_1^2$ having the maximum at the origin. However this is not a local maximum
$\mathsf{F}(x_1,x_2)$: for instance, the restriction of the function $\mathsf{F}$ on the $x_2$-axis is the monotone function  
$$\left\{\begin{array}{rl}x_2& \text{for}\ x_2>0\ ,\\[.6em] 2x_2& \text{for}\ x_2\leq0\ .\end{array}\right.$$
 
For the proof of Theorem \ref{lq2}, we need, in the relation $\left(\ref{186}\right)$, a
quantitative statement established in the claim (\ref{85}) of Lemma
\ref{prele}.

\section{Hidden symmetry\vspace{.2cm}}
\label{hiddensymmetry}
In this Section we discuss and generalize the observations from Section \ref{seproo}
about the algebraic nature of the Taylor coefficients for the
deformations around the configuration $C_{6}\left(  \varphi_{\mathfrak{m}},\delta_{\mathfrak{m}},\varkappa_{\mathfrak{m}}\right)$.

\subsection{Galois symmetry\vspace{.1cm}}\label{hiddenGsymmetry}
As we mentioned in the proof from Section \ref{seproo}, the coefficients of the differentials of squares of distances around
the record point $C_{6}\left(  \varphi_{\mathfrak{m}},\delta_{\mathfrak{m}},\varkappa_{\mathfrak{m}}\right)$
belong - after the strange normalization (\ref{strnorma}) -
to the field $\mathbb{Q}[\tau]$. The same holds for the coefficients of all 12 quadratic forms in 15 variables. This looks miraculous. We believe that all coefficients of the Taylor decompositions of all 15 distances (not only 12 relevant distances) around
the record point $C_{6}\left(  \varphi_{\mathfrak{m}},\delta_{\mathfrak{m}},\varkappa_{\mathfrak{m}}\right)$
belong to the field $\mathbb{Q}[\tau]$. We have checked that it is so to some orders of $t$ (for some distances up to $t^8$).

\vskip .2cm
The evidence that the Galois symmetry is global -- that is, that the Galois symmetry holds on the level of functions, not only their Taylor decompositions -- is given in the Subsection \ref{glovevi}, see Proposition \ref{geogasy} and especially formula  (\ref{formdistad}).

\vskip .2cm
We reveal now a hidden symmetry, based on the above observation, of the formulas for the
coefficients of the Taylor expansions of distances around the point $C_{6}\left(  \varphi_{\mathfrak{m}}
,\delta_{\mathfrak{m}},\varkappa_{\mathfrak{m}}\right)$.

\vskip .2cm
Let $\mathcal{W}$ be a $\mathbb{Q}$-vector space with the basis
$$\{ J_\upsilon\}\ \ \text{where}\ \ J\in \{A,B,C,D,E,F\}\ \ \text{and}\ \ \upsilon\in \{\varkappa,\varphi,\delta\} \ .$$
Let $\widetilde{\mathcal{W}}:=\mathbb{Q}[\tau]\otimes_{\mathbb{Q}} \mathcal{W}$. We consider $\widetilde{\mathcal{W}}$
as a vector space over $\mathbb{Q}$. The differentials of distances (see formulas in Subsection \ref{vadifi})
are naturally interpreted as elements of $\widetilde{\mathcal{W}}$. The Galois conjugation $\tau\mapsto \bar\tau$ turns into an involutive automorphism of the space $\widetilde{\mathcal{W}}$ which we denote by $\iota$.

\vskip .2cm
Let $\Pi_\varpi$ and $\Pi_\varrho$ be the operators in $\widetilde{\mathcal{W}}$ realizing the following permutations:
$$\varpi:=(A,B,C)(D,E,F)$$
and
$$\varrho:=(A,D)(B,F)(C,E)\ .$$
It is straightforward to see that the operators $\Pi_\varpi$ and $\Pi_\varrho$
preserve the differentials of distances from Subsection \ref{vadifi}.
The operators $\Pi_\varpi$ and $\Pi_\varrho$ (as well as the permutations $\varpi$ and $\varrho$) generate the group
$\mathbb{D}_3$.

\vskip .2cm
Let $\Pi_\varsigma^\circ$ be the operator in $\widetilde{\mathcal{W}}$ defined by
\begin{equation}\label{neinvo}J_\varkappa\mapsto -\varsigma(J)_\varkappa\ ,\  J_\varphi\mapsto \varsigma(J)_\varphi\ ,\  J_\delta\mapsto -\varsigma(J)_\delta\ ,\end{equation}
for the following permutation $\varsigma$:
$$\varsigma :=(B,C)(D,F)\ ,$$
the elements $A$ and $E$ are fixed by $\varsigma$.
Finally, let $\Pi_\varsigma$ be the composition of the Galois involution and
$\Pi_\varsigma^\circ$,
\begin{equation}\label{neinvo2}\Pi_\varsigma :=\iota\circ\Pi_\varsigma^\circ\ .\end{equation}
The direct inspection shows that the operator $\Pi_\varsigma$ preserves the differentials of distances from Subsection \ref{vadifi};
it sends the differentials of the $\{AF,CE,BD\}$-triplet to the differentials of the $\{CF,BE,AD\}$-triplet and permutes the differentials of the 6-plet $\{AB,BC,CA,DE,EF,FD\}$.

\vskip .2cm
Let $G$ be the group generated by the operators $\Pi_\varpi$, $\Pi_\varrho$ and $\Pi_\varsigma$. We have
$$\Pi_\varpi=\Pi_\varrho \Pi_\varsigma \Pi_\varrho \Pi_\varsigma\ ,$$
so the group $G$ is generated by the operators $\Pi_\varrho$ and $\Pi_\varsigma$ alone. The underlying permutation of
$\Pi_\varrho \Pi_\varsigma$ is
$$(A,F,C,E,B,D)\ ,$$
so
$$ (\Pi_\varrho \Pi_\varsigma)^6=\text{id}\ .$$
Thus, the group $G$ is the dihedral group $\mathbb{D}_6$. Note that the underlying permutations of tangent lines
form exactly the the symmetry group $\mathbb{D}_6$ of the initial configuration $C_6$.

\vskip .2cm
We have checked that the operator $\Pi_\varsigma$ preserves the second differentials as well and we believe that it is so for all orders of the Taylor expansion.
In the same way as for the first differentials, the operator $\Pi_\varsigma$ sends the second differentials of the $\{AF,CE,BD\}$-triplet to the second differentials of the $\{CF,BE,AD\}$-triplet and permutes the second differentials of the 6-plet
$\{AB,BC,CA,DE,EF,FD\}$.
To convince the reader we present three formulas for the second differentials, one differential for each of the triplets
$\{AF,CE,BD\}$ and $\{CF,BE,AD\}$ and one differential for the 6-plet $\{AB,BC,CA,DE,EF,FD\}$:
$$\left[d(BD)^2\right]_2=\frac{2597-1017\sqrt{5}}{127776}(B_{\varkappa,1}+D_{\varkappa,1})^2
-\frac{651+236\sqrt{5}}{792}(B_{\delta,1}^2+D_{\delta,1}^2)$$
$$+\frac{-97+60\sqrt{5}}{528}(B_{\varphi,1}^2+D_{\varphi,1}^2)+\frac{265-3\sqrt{5}}{132}B_{\varphi,1}D_{\varphi,1}
+\frac{219+124\sqrt{5}}{198}B_{\delta,1}D_{\delta,1}$$
$$+\frac{29-109\sqrt{5}}{2904}(B_{\varkappa,1}+D_{\varkappa,1})(B_{\delta,1}+D_{\delta,1})
+\frac{5-48\sqrt{5}}{132}(B_{\varphi,1}B_{\delta,1}+D_{\varphi,1}D_{\delta,1})$$
$$+\frac{181+29\sqrt{5}}{132}(B_{\delta,1}D_{\varphi,1}+B_{\varphi,1}D_{\delta,1})
+\frac{90-17\sqrt{5}}{726}(B_{\varkappa,1}+D_{\varkappa,1})(B_{\varphi,1}+D_{\varphi,1})$$
for the $\{AF,CE,BD\}$-triplet,
$$\left[d(CF)^2\right]_2=\frac{2597+1017\sqrt{5}}{127776}(C_{\varkappa,1}+F_{\varkappa,1})^2
-\frac{651-236\sqrt{5}}{792}(C_{\delta,1}^2+F_{\delta,1}^2)$$
$$-\frac{97+60\sqrt{5}}{528}(C_{\varphi,1}^2+F_{\varphi,1}^2)+\frac{265+3\sqrt{5}}{132}C_{\varphi,1}F_{\varphi,1}
+\frac{219-124\sqrt{5}}{198}C_{\delta,1}F_{\delta,1}$$
$$+\frac{29+109\sqrt{5}}{2904}(C_{\varkappa,1}+F_{\varkappa,1})(C_{\delta,1}+F_{\delta,1})
+\frac{5+48\sqrt{5}}{132}(C_{\varphi,1}B_{\delta,1}+F_{\varphi,1}F_{\delta,1})$$
$$+\frac{181-29\sqrt{5}}{132}(C_{\delta,1}F_{\varphi,1}+C_{\varphi,1}F_{\delta,1})
+\frac{90+17\sqrt{5}}{726}(C_{\varkappa,1}+F_{\varkappa,1})(C_{\varphi,1}+F_{\varphi,1})$$
for the $\{CF,BE,AD\}$-triplet and
$$\frac{150}{11}\left[d(BC)^2\right]_2=\frac{1}{32}(B_{\varkappa,1}-C_{\varkappa,1})^2
+\frac{133+9\sqrt{5}}{16}B_{\varphi,1}^2+\frac{133-9\sqrt{5}}{16}C_{\varphi,1}^2$$
$$+\frac{27-2\sqrt{5}}{16}B_{\varphi,1}(B_{\varkappa,1}-C_{\varkappa,1})
+\frac{27+2\sqrt{5}}{16}C_{\varphi,1}(B_{\varkappa,1}-C_{\varkappa,1})+\frac{109}{4}B_{\varphi,1}C_{\varphi,1}$$
$$+\frac{19+5\sqrt{5}}{4}B_{\delta,1}C_{\varphi,1}-\frac{19-5\sqrt{5}}{4}B_{\varphi,1}C_{\delta,1}
-\frac{53}{3}B_{\delta,1}C_{\delta,1}$$
$$-\frac{103+39\sqrt{5}}{24}B_{\delta,1}^2-\frac{103-39\sqrt{5}}{24}C_{\delta,1}^2
-\frac{43+4\sqrt{5}}{2}B_{\varphi,1}B_{\delta,1}+\frac{43-4\sqrt{5}}{2}C_{\varphi,1}C_{\delta,1}
$$
$$+\frac{7\bar\tau}{4}B_{\delta,1}(B_{\varkappa,1}-C_{\varkappa,1})
-\frac{7\tau}{4}C_{\delta,1}(B_{\varkappa,1}-C_{\varkappa,1})$$
for the 6-plet.

\vskip .3cm
We believe that this action of the group $\mathbb{D}_6$ extends to all orders of the Taylor decompositions of the distances.

\paragraph{Remark.} We were not discussing the remaining $\{AE,BF,CD\}$-triplet because it was not relevant for the proof.
Still, the same phenomenon holds for the Taylor coefficients of the squares of the distances in this triplet. As an illustration we
present the first and second differentials for the squares of the distances between the lines $B$ and $F$:
$$\frac{169}{6}\left[d(BF)^2\right]_1=2\sqrt{5}(B_{\varkappa,1}+F_{\varkappa,1})
+5(B_{\varphi,1}+F_{\varphi,1})-\sqrt{5}(B_{\delta,1}+F_{\delta,1})\ ,$$
and
$$-\frac{105456}{11}\left[d(BF)^2\right]_2=209 (B_{\varkappa,1}+F_{\varkappa,1})^2+560\sqrt{5}
(B_{\varkappa,1}+F_{\varkappa,1})(B_{\varphi,1}+F_{\varphi,1})$$
$$+57445(B_{\varphi,1}+F_{\varphi,1})^2-115600 B_{\varphi,1}F_{\varphi,1}-
404 (B_{\varkappa,1}+F_{\varkappa,1})(B_{\delta,1}+F_{\delta,1})$$
$$+5492\sqrt{5}(B_{\varphi,1}B_{\delta,1}+F_{\varphi,1} F_{\delta,1})-6208\sqrt{5}
(B_{\varphi,1}F_{\delta,1}+B_{\delta,1}F_{\varphi,1} )$$
$$+1466(B_{\delta,1}^2+F_{\delta,1}^2)-968B_{\delta,1}F_{\delta,1}\ .$$

\subsection{Discussion\vspace{.1cm}}\label{glovevi}
Puzzled by the hidden Galois symmetry, described in Subsection \ref{hiddenGsymmetry}, we performed several experiments wondering
whether this hidden symmetry is specific for the record point $C_{6}\left(  \varphi_{\mathfrak{m}},\delta_{\mathfrak{m}},\varkappa_{\mathfrak{m}}\right)$
or it is inherent at some other points of the curve $\gamma(\varphi)$, see formula (\ref{curgamma}), Section \ref{mairesu}.
We computationally clarify this question in the present section.
Namely we explain the geometric origin of the symmetry and show that it becomes the Galois symmetry for `rational' points
of the curve $\gamma(\varphi)$.

\vskip .2cm
We recall some information about the curve $\gamma(\varphi)$ \cite{OS}.

\vskip .2cm
The curve $\gamma(\varphi)$ is related to a part $\Gamma$ of the plane algebraic curve $\Psi =0$ where
\[
\Psi\!=\!4S^{2}-8T^{2}-3S^{4}+29S^{2}T^{2}-4T^{4}-22S^{4}T^{2}+14S^{2}T^{4}
+4S^{6}T^{2}-7S^{4}T^{4}+S^{2}T^{6}\ ,
\]
see Fig. \ref{CurvePsi}.

\begin{figure}[th]
\vspace{.2cm} \centering
\includegraphics[scale=0.5]{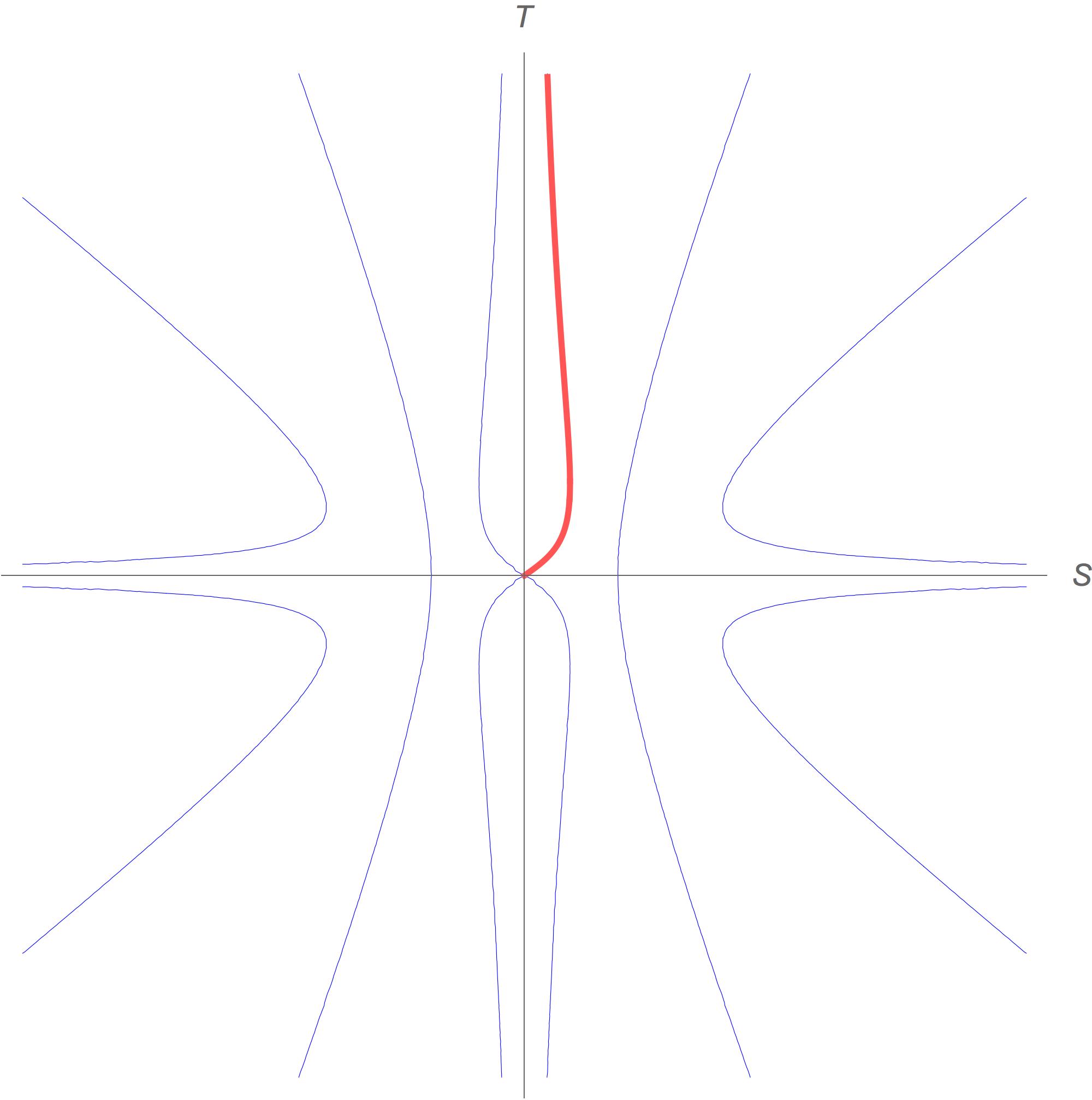}
\vskip -.2cm
\caption{Curve $\Psi=0$, the part $\Gamma$ is depicted in red}
\label{CurvePsi}
\end{figure}

The curve $\Gamma$ admits the following parameterization:
\begin{equation}
S(x)=2\sqrt{\frac{(1-x)x(1+x)}{1+7x+4x^{2}}}\ , \label{trajphi0}
\end{equation}
\begin{equation}
T(x)=\sqrt{\frac{(1-x)(1+3x)}{x(1+7x+4x^{2})}}\ , \label{trajdelta0}
\end{equation}
where $x$ ranges from 1 to 0.

\vskip .2cm
The parameterization of the curve $\gamma(\varphi)$ is given by
\begin{equation}
S(x)\equiv\sin\bigl(\varphi(x)\bigr)
\ ,\ T(x)\equiv\tan\bigl(\delta(x)\bigr)\ ,
\end{equation}
and
\begin{equation}
\tan\bigl(\varkappa(x)\bigr)=\frac{x-1}{\sqrt{(1+x)(1+3x)}}\ . \label{trajkappa}
\end{equation}
For brevity, we denote by $C_{6,x}$ the configuration $C_{6}\bigl(  \varphi(x),\delta(x),\varkappa(x)\bigr)$ of six tangent lines.
The squares of the relevant twelve distances between the lines of the configuration $C_{6,x}$ are all equal to
$$\frac{12x}{1+7x+4x^2}\ .$$

In our experiments we were fixing various rational values of the parameter $x$, then making a general (involving all 15 parameters) perturbation of the
configuration $C_{6,x}$ and studying the nature and the structure of the Taylor coefficients of the squares of distances in a vicinity of $C_{6,x}$.

\vskip .2cm
For example, at $x=1/3$, the Taylor coefficients, after an appropriate normalization, belong to $\mathbb{Q}[\sqrt{2}]$ and the Galois automorphism
of  $\mathbb{Q}[\sqrt{2}]$ restores the $\mathbb{D}_6$ symmetry, as in Section \ref{hiddenGsymmetry}. However, at $x=1/5$, the Taylor coefficients,
after an appropriate normalization, belong to $\mathbb{Q}$.

\vskip .2cm
To summarize the results of our study, let
\begin{equation}\label{esseirra}
\mathfrak{p}_x =\sqrt{\frac{(1 + x) (1 + 3 x)}{3}}\ .
\end{equation}

As in Section \ref{seproo} the perturbed position of a line $J\in \{A,B,C,D,E,F\}$
in the configuration $C_{6,x}$ is
$$J=J(\varkappa(x)+\Delta_J^{\varkappa},\varphi(x)+\Delta_J^{\varphi},\delta(x)+\Delta_J^{\delta})$$
and
\begin{equation}\label{strnormawithx}
\begin{array}{c}
\displaystyle{ \Delta_J^{\varkappa}=\vartheta_\varkappa\cdot \left(J_{\varkappa,1}t+J_{\varkappa,2}t^2+o(t^2)\right)\ ,}\\[1.5em]
\displaystyle{ \Delta_J^{\varphi}=\vartheta_\varphi\cdot \left(J_{\varphi,1}t+J_{\varphi,2}t^2+o(t^2)\right)\ ,}\\[1.5em]
\displaystyle{ \Delta_J^{\delta}=\vartheta_\delta\cdot \left(J_{\delta,1}t+J_{\delta,2}t^2+o(t^2)\right)\ .}
\end{array}\end{equation}
Here $\vartheta_\varkappa, \vartheta_\varphi$ and $\vartheta_\delta$ are normalization constants.

\vskip .2cm
As before, to fix the rotational symmetry we keep the tangent line $A$ at its place, that is,
$A_{\varphi,j}=A_{\varkappa,j}=A_{\delta,j}=0$, $j=1,2,\dots$.

\begin{proposition}\label{geogasy} Let $x$ be a rational number between 0 and 1 such that $\mathfrak{p}_x $ is not rational.

\vskip .2cm
\noindent {\rm (i)} There exists a choice of the normalization constants $\vartheta_\varkappa, \vartheta_\varphi$ and $\vartheta_\delta$ such that
the Taylor coefficients  of the squares of distances belong to $\mathbb{Q}[\mathfrak{p}_x ]$.

\vskip .2cm
\noindent {\rm (ii)} The operator $\Pi_\varsigma $, defined by the formula (\ref{neinvo2}), where $\iota$ is the Galois conjugation $\mathfrak{p}_x \to -\mathfrak{p}_x $
of $\mathbb{Q}[\mathfrak{p}_x ]$, restores the $\mathbb{D}_6$ symmetry.
\end{proposition}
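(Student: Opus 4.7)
My plan is to prove both parts by direct computation. For part (i), I first extract the trigonometric data at the base $C_{6,x}$ from the parameterization (\ref{trajphi0})--(\ref{trajkappa}). Setting $Q=\sqrt{2(1+x+2x^2)}$, one has $\sin\varkappa(x)=-(1-x)/Q$ and $\cos\varkappa(x)=\sqrt{3}\,\mathfrak{p}_x/Q$; analogous formulas express $\sin\varphi,\cos\varphi,\sin\delta,\cos\delta$ in terms of two further auxiliary radicals $\sqrt{1+7x+4x^2}$ and $\sqrt{(1+2x)(1+x+2x^2)}$. The key structural observation is the parity in $\mathfrak{p}_x$: $\cos\varkappa$ and $\sin\delta$ are odd in $\mathfrak{p}_x$, while $\sin\varkappa$, $\cos\delta$, $\sin\varphi$, $\cos\varphi$ are even; the sines of the six longitudes $\pi/6+k\pi/3\pm\varkappa$ additionally pick up a factor of $\sqrt{3}$. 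I choose $\vartheta_\varkappa,\vartheta_\varphi,\vartheta_\delta$ as the rational multiples of $\sqrt{3}\,Q$, $\sqrt{1+7x+4x^2}$, $\sqrt{(1+2x)(1+x+2x^2)}$ that absorb these auxiliary radicals into the perturbation variables; the correct rational scalars are pinned down by requiring that the first-order differentials lie in $\mathbb{Q}[\mathfrak{p}_x]$. Substituting into (\ref{formdist}), a ratio of a squared determinant by $1-(\xi',\xi'')^2$ (both quadratic in position and tangent data), each auxiliary radical appears only in even total power, and hence rationally, at every order of the Taylor expansion, giving (i).

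For part (ii), the plan is to interpret $\iota$ as a geometric operation on the base. The parity observation says that under $\iota$, $\sin\varphi$, $\cos\varphi$, $\sin\varkappa$, $\cos\delta$ are fixed while $\cos\varkappa$, $\sin\delta$ flip sign. A line-by-line check shows that the line at slot $A$ of the Galois-transformed base equals $\eta(E)$, where $\eta:(x,y,z)\mapsto(x,y,-z)$ is the equatorial reflection, and similarly for the other slots; writing $\mu=(A,E)(B,F)(C,D)$, the Galois-transformed base coincides with $\eta\mu(C_{6,x})$. Since $\eta$ is an isometry, the Taylor polynomials $P_{JK}$ of $d^2_{JK}$ around $C_{6,x}$ satisfy
\[\iota(P_{JK})=P_{\mu J,\mu K}\circ S_{\mu,(-,+,-)},\]
where $S_{\mu,(-,+,-)}$ is the substitution $L_\varphi\mapsto -\mu(L)_\varphi$, $L_\varkappa\mapsto\mu(L)_\varkappa$, $L_\delta\mapsto-\mu(L)_\delta$ on the perturbation variables (the sign pattern is forced by $\eta$ acting on angular coordinates as $(\varphi,\varkappa,\delta)\mapsto(-\varphi,\varkappa,-\delta)$).

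To finish, I invoke the $\mathbb{D}_3$-symmetry of $C_{6,x}$. The commuting product $\mu\varsigma=(A,E)(B,D)(C,F)$ equals $\varpi\varrho\in\mathbb{D}_3$ and is realized by the $180^\circ$ rotation about $R_z^{120^\circ}(\hat{x})$, whose action on angular coordinates has sign pattern $(-,-,+)$; hence $P_{JK}\circ S_{\mu\varsigma,(-,-,+)}=P_{\mu\varsigma J,\mu\varsigma K}$. The composition identity $S_{\mu,(-,+,-)}\circ S_{\varsigma,(+,-,-)}=S_{\mu\varsigma,(-,-,+)}$ (direct check) together with the conjugacy $\mu\varsigma\mu=\varsigma$ then yield
\[\iota\,\Pi_\varsigma^\circ(P_{JK})=P_{\mu J,\mu K}\circ S_{\mu\varsigma,(-,-,+)}=P_{\mu\varsigma\mu J,\mu\varsigma\mu K}=P_{\varsigma J,\varsigma K}\,,\]
completing (ii). The main obstacle is the sign bookkeeping across the substitutions $S_{\bullet,\bullet}$, together with the line-by-line verification that the Galois-transformed base equals $\eta\mu(C_{6,x})$ — this identification is what pins down the geometric content of the Galois involution and, a fortiori, the form of $\Pi_\varsigma^\circ$.
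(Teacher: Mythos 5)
Your proof takes a genuinely different route from the paper's. The paper's proof is explicitly a sketch: it isolates the simplest non‑trivial instance, perturbing only the angle $\delta$ of the line $D$ (respectively $F$) while keeping $A$ fixed, and then exhibits a closed formula (\ref{formdistad}) for $d(A,D(\xi))^2$ whose coefficients, after the renormalization $\Xi_1=(\mathfrak{q}_x/\mathfrak{p}_x^2)\Xi$, visibly lie in $\mathbb{Q}[\mathfrak{p}_x]$; part (ii) is read off by comparing this formula with its $A,F$ analogue. It is a verification rather than an explanation, and the paper says the general case is ``considerably lengthier.'' You instead identify the \emph{geometric origin} of the Galois conjugation: extending $\iota$ to the splitting field by flipping $\sqrt{1+3x}$ (and fixing $\sqrt{x},\sqrt{1-x},\sqrt{1+x},\sqrt{1+7x+4x^2},\sqrt{1+2x},\sqrt{1+x+2x^2}$), the base acquires $(\varphi,\varkappa,\delta)\mapsto(\varphi,\pi-\varkappa,-\delta)$, which you recognize as $\eta\mu(C_{6,x})$ with $\eta$ the equatorial reflection and $\mu=(A,E)(B,F)(C,D)$; I checked that $A^{\iota}=\eta(E)$ in the coordinates (\ref{confphideka}), and that $\mu\varsigma=\varpi\varrho\in\mathbb{D}_3$ and $\mu\varsigma\mu=\varsigma$. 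Combining the isometry $\eta$ with the $\mathbb{D}_3$-covariance then gives the $\mathbb{D}_6$ conclusion for all fifteen perturbation variables simultaneously, not just the $\delta$-slice. This is essentially the idea that the paper gestures at in its ``Remark 1'' (passing to the functional field and noting that $\gamma_x$ and $\overline{\gamma}_x$ are the cosines of the angles at $AD$ and $AF$) but does not develop into a proof.

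Two cautions, both in the spirit of the paper's own disclaimer. First, for part (i) the parity bookkeeping is more delicate than the phrase ``each auxiliary radical appears in even total power'' suggests: the base trigonometric data live in a multi-quadratic extension in which $\mathfrak{p}_x=\sqrt{(1+x)(1+3x)/3}$ shares the factor $\sqrt{1+x}$ with $\sin\varphi$ and $\sqrt{1+3x}$ with $\tan\delta$, so the statement ``$\sin\varphi$ is even, $\sin\delta$ is odd'' only becomes unambiguous once the extension of $\iota$ is fixed as above; the proof that after the chosen normalization the non-$\mathfrak{p}_x$ radicals cancel must be done uniformly across (\ref{explformdist}). Second, your identity $\iota(P_{JK})=P_{\mu J,\mu K}\circ S_{\mu,(-,+,-)}$ uses that $\iota$ fixes the normalization constants $\vartheta_\varkappa,\vartheta_\varphi,\vartheta_\delta$; since these absorb auxiliary radicals, their invariance under the chosen extension of $\iota$ has to be part of the choice, otherwise extra sign or rational factors would contaminate the substitution rule. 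Neither point is a gap in principle — you flag the sign bookkeeping yourself — but they are exactly the places where a careless version of this argument would fail, and they should be spelled out.
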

\noindent{\bf Note.} For $x=1/5$ we find that $\mathfrak{p}_x $ is rational, $\mathfrak{p}_x =4/5$. The equation $\mathfrak{p}_x=\mathfrak{r}$, $x\in\mathbb{Q}$ and
$\mathfrak{r}\in\mathbb{Q}$, can be easily solved: substituting $x=(y-2)/3$ we find $y^2=9\mathfrak{r}^2+1$, so the question reduces to Pythagorean triples.

\vskip .2cm
The irrationality $\mathfrak{p}_x$ can be found among the geometric objects in the configuration $C_{6,x}$. This irrationality is related to the cosine $\gamma_x$ of the angle between the tangent lines $A$ and $D$. We calculate:
$$\gamma_x =-\frac{3(1-x)}{2(1+2x)}\,\mathfrak{p}_x
+\frac{x(1+5x)}{2(1+2x)}\ .$$
In particular, under the conditions of Proposition \ref{geogasy}, $\mathbb{Q}[\mathfrak{p}_x ]=\mathbb{Q}[\gamma_x]$ and we can reformulate the part (i) of
Proposition \ref{geogasy}: the coefficients of the Taylor expansions belong to the field $\mathbb{Q}[\gamma_x]$.

\vskip .2cm
The cosine of the angle between the tangent lines $A$ and $F$ is equal to $\overline{\gamma}_x$, the Galois conjugate of $\gamma_x$ in the
field $\mathbb{Q}[\mathfrak{p}_x]$,
$$\overline{\gamma}_x =\frac{3(1-x)}{2(1+2x)}\,\mathfrak{p}_x
+\frac{x(1+5x)}{2(1+2x)}\ .$$

\vskip .2cm
\noindent Sketch of the {\bf Proof} of Proposition \ref{geogasy}. We do not furnish the complete details in order not to overload the presentation.
Namely, we will work with only two distances, $d(AD)$ and $d(AF)$.  Since we keep the line $A$ fixed (which makes the formulas more tangible), the
distance $d(AD)$ depends, for a given $x$, on the three parameters, characterizing the position of the perturbed line $D$. We consider only a simplified
situation, namely we perturb only the angle\footnote{$\ $Due to the character of the formulas, the angle $\delta$ is the most manageable of
the three angles. We will write the formula for the distance between the line $A$ and the perturbed line $D$ `im gro\ss en', without decomposing in the Taylor series.}
$\delta$  of the line $D$ and follow the dependence of the distance on this variation. Thus, for a given $x$, we
consider the function $d\bigl(A,D(\xi)\bigr)$ where
$$A=A(\varkappa(x),\varphi(x),\delta(x))\ \ \text{and}\ \
D(\xi)=D(\varkappa(x),\varphi(x),\delta(x)+\xi)\ .$$

\vskip .2cm
Since the tangent function has rational Taylor coefficients, we are allowed to pass to the variable
$$\Xi=\tan (\xi)$$
instead of $\xi$.

\vskip .2cm
In the notation of Section \ref{sectConfManifold}, let  $u=\bigl(  (\varphi_1,\varkappa_1),\uparrow_{\delta_1}\bigr)  $ and
$v=\bigl(  (\varphi_2,\varkappa_2),\uparrow_{\delta_2}\bigr)$ be two lines in $M$.
The formula (\ref{formdist}) for the square of the distance between lines $u$ and $v$ has the following explicit form in the coordinates
$(\varphi,\varkappa,\delta)$
\begin{equation}\label{explformdist}
d_{uv}^{\hspace{.04cm}2}=\frac{\mathcal{N}^{\hspace{.04cm}2}}{\bigl(1+\tan^2 (\delta_1)\bigr)\bigl(1+\tan^2 (\delta_2)\bigr)
-\mathcal{D}^{\hspace{.04cm}2}}\ ,\end{equation}
where
$$\begin{array}{c}
\mathcal{N}\!:=\!\bigl(\tan(\delta_1)+\tan(\delta_2)\bigr)
\Bigl(
\cos(\varphi_1)\cos(\varphi_2)-\cos(\Delta\varkappa)
\bigl(
1-\sin(\varphi_1)\sin(\varphi_2)\bigr)
\Bigr)\\[1em]
-\bigl(1-\tan(\delta_1)\tan(\delta_2)\bigr)\sin(\Delta\varkappa)\bigl(\sin(\varphi_1)-\sin(\varphi_2)\bigr)
\ ,\end{array}$$

$$\begin{array}{c}\mathcal{D}:=
\cos(\varphi_1)\cos(\varphi_2)
+\cos(\Delta\varkappa)\bigl(
\sin(\varphi_1)\sin(\varphi_2)+\tan(\delta_1)\tan(\delta_2)\bigr)\\[1em]
+\sin(\Delta\varkappa)\bigl(
\tan(\delta_2)\sin(\varphi_1)-\tan(\delta_1)\sin(\varphi_2)\bigr)\
\end{array}$$
and
$$\Delta\varkappa=\varkappa_1-\varkappa_2\ .$$

\paragraph{Assertion {\rm (i)}.} Let
\begin{equation}\label{renorx}
\Xi_1=\frac{\mathfrak{q}_x}{\mathfrak{p}_x^2}\,\Xi\ ,
\end{equation}
where
\begin{equation}\label{renorx2}
\mathfrak{q}_x=\sqrt{\frac{1+x}{3x(1-x)(1+7x+4x^2)}}\ .
\end{equation}

\vskip .2cm
The formula (\ref{explformdist}) for the lines $A$ and  $D(\xi)$ gives, after numerous simplifications:
\begin{equation}\label{formdistad}
d\bigl(A,D(\xi)\bigr)^2=\frac{x(1-x)(1+3x)}{1 + 7 x + 4 x^2}\ \cdot  \frac{\mathfrak{n}_{AD}^2}{\mathfrak{s}-\mathfrak{t}_{AD}^2}\ ,\end{equation}
where
$$\mathfrak{s}=1+\Xi^2=x(1-x)(1+3x)(1+7x+4x^2)\,\Xi_1^2\ ,$$
and
$$\mathfrak{n}_{AD}=\frac{3}{x}\left(\frac{2\mathfrak{p}_x\gamma_x}{1+3x}+\frac{1}{1+2x}\right)+\left( (1+2x)^2 (\gamma_x-1)+6x\gamma_x\right)
\mathfrak{p}_x\Xi_1\ ,$$
$$\mathfrak{t}_{AD}=\gamma_x-3(1-x)\left( \mathfrak{p}_x\gamma_x+\frac{1+3x}{2(1+2x)}\right)\mathfrak{p}_x\Xi_1\ .$$
The formula (\ref{formdistad}) establishes the part (i). Indeed, the formula (\ref{renorx}) gives the needed renormalization such that the final
expression  (\ref{formdistad}) is a rational function in $\Xi_1$ with coefficients in $\mathbb{Q}[\mathfrak{p}_x]$.

\paragraph{Assertion {\rm (ii)}.}
A parallel computation, now for the tangent lines $A$ and  $F(\xi)=F(\varkappa(x),\varphi(x),\delta(x)+\xi)$, yields
$$d\bigl(A,F(\xi)\bigr)^2=\frac{x(1-x)(1+3x)}{1 + 7 x + 4 x^2}\ \cdot  \frac{\mathfrak{n}_{AF}^2}{\mathfrak{s}-3(1-x^2)\,\mathfrak{t}_{AF}^2}\ ,$$
where
$$\mathfrak{n}_{AF}=\frac{3}{x}\left(-\frac{2\mathfrak{p}_x\overline{\gamma}_x}{1+3x}+\frac{1}{1+2x}\right)-\left( (1+2x)^2 (\overline{\gamma}_x-1)
+6x\overline{\gamma}_x\right)\mathfrak{p}_x\Xi_1\ ,$$
$$\mathfrak{t}_{AF}=\overline{\gamma}_x+3(1-x)\left( -\mathfrak{p}_x\overline{\gamma}_x+\frac{1+3x}{2(1+2x)}\right)\mathfrak{p}_x\Xi_1\ .$$
A direct comparison shows that the expressions for $d\bigl(A,F(\xi)\bigr)^2$ are obtained from the expressions for $d\bigl(A,D(\xi)\bigr)^2$ by the simultaneous change of sign of $\Xi$ and
$\mathfrak{p}_x $, in the full accordance with the formula (\ref{neinvo}), so the Galois action of the operator $\Pi_\varsigma$, see (\ref{neinvo2}), restores
the $\mathbb{D}_6$ symmetry.

\vskip .2cm
In the general situation, when all three parameters, $\varkappa,\varphi$ and $\delta$ of the tangent lines $D$ and $F$ are perturbed, the closed formulas
for the squares of distances are considerably lengthier (and as non-illustrative as the formula (\ref{formdistad})) and we do not present them.
\hspace{.15cm}\myblacksquare

\vskip .2cm
Two phenomena exhibited in Proposition \ref{geogasy} -- (i) all irrationalities, except one, are absorbed in the normalization factors;  (ii) the Galois conjugation of
the remaining irrationality restores the $\mathbb{D}_6$ symmetry -- shows a certain consonance between the curve $\Gamma$ and the ingredients of the formula
for the distance between skew lines. In the process of calculations it was important that for a rational $x$ all angles $\varphi(x)$, $\varkappa(x)$ and
$\delta(x)$ are purely geodetic in the sense of \cite{CRS}, that is, squares of trigonometric functions of these angles are rational.

\vskip .2cm
We conclude by two remarks.

\vskip .2cm
\noindent{\bf Remark 1.} The Galois symmetry can be extended to all values of $x$ if one works with the functional fields.
We demonstrate it in the same simplified situation, as in the proof of Proposition \ref{geogasy}, where we perturb only the angle $\delta$  of the lines $D$ and $F$.
Let $\mathbb{F}:=\mathbb{Q}(x)$ be the field of rational functions in one variable. We consider
its biquadratic extension $\mathbb{F}[\mathfrak{q}_x,\mathfrak{p}_x]$, where $\mathfrak{p}_x$ is defined by the formula (\ref{esseirra}) and
$\mathfrak{q}_x$ --  by the formula (\ref{renorx2}). The Galois group of the extension $\mathbb{F}[\mathfrak{q}_x,\mathfrak{p}_x]$ of the field
$\mathbb{F}$ is the Klein four-group $C_2\times C_2$ generated by the sign changes of $\mathfrak{q}_x$ and $\mathfrak{p}_x$. The Galois involution
$\iota\colon\mathfrak{p}_x\to -\mathfrak{p}_x$ is continuous in the natural sense; it allows to define the involution  $\Pi_\varsigma$ which restores the
$\mathbb{D}_6$ symmetry. Thus, performing the involution $\Pi_\varsigma$ on the level
of the functional fields and only then specializing the value of $x$ allows to see a shadow of the continuous extension of the involution $\Pi_\varsigma$ to
all points of the curve $\Gamma$.

\vskip .2cm
\noindent{\bf Remark 2.} We have added this comment because of some questions raised during our talks on the subject.
The parameterization (\ref{trajphi0})-(\ref{trajdelta0}) serve only the part $\Gamma$ of the curve $\Psi=0$. What can be said about other components?

\begin{figure}[th]
\vspace{.2cm} \centering
\includegraphics[scale=0.36]{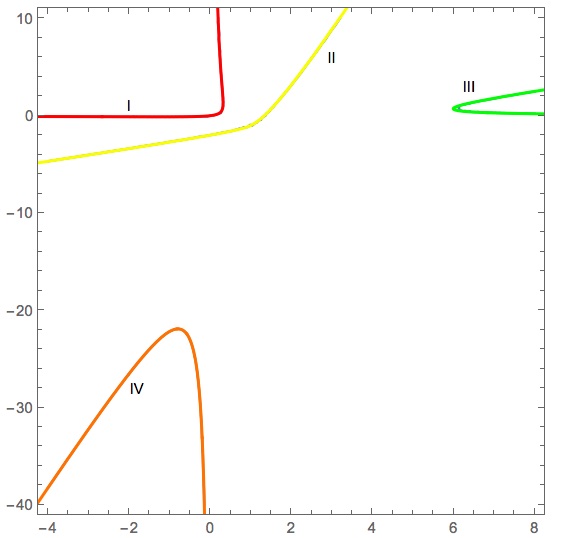}
\vskip -.2cm
\caption{Real components of the curve $\psi=0$}
\label{ParameterizationCurvePsi}
\end{figure}

It is easier to work with the curve $\psi=0$, where
\[
\psi\!=\!4s-8t-3s^{2}+29st-4t^{2}-22s^{2}t+14st^{2}
+4s^{3}t-7s^{2}t^{2}+st^{3}\ ,
\]
since the polynomial $\Psi$ depends only on $s=S^2$ and $t=T^2$. The real components of the curve $\psi=0$ are shown on Fig. \ref{ParameterizationCurvePsi}.

\vskip .2cm
The parameterization (\ref{trajphi0})-(\ref{trajdelta0}) becomes
$$
s(x)=\frac{4(1-x)x(1+x)}{1+7x+4x^{2}}\ \ ,\ \  t(x)=\frac{(1-x)(1+3x)}{x(1+7x+4x^{2})}\ .
$$

The denominators of the rational functions $s(x)$ and $t(x)$ are singular at $x=0$ and the roots $(-7 -\sqrt{33})/8\approx -1.5931$ and
$(-7 +\sqrt{33})/8\approx -0.1569$ of the polynomial $1+7x+4x^{2}$.
For $x$ ranging from $-\infty$ to $(-7 -\sqrt{33})/8$ we obtain the part III (in green) on Fig. \ref{ParameterizationCurvePsi}; for $x$ ranging from $(-7 -\sqrt{33})/8$ to
 $(-7 +\sqrt{33})/8$ -- the part II (in yellow) on Fig. \ref{ParameterizationCurvePsi};
for $x$ ranging from $(-7 +\sqrt{33})/8$ to $0$ -- the part IV (in orange) on Fig. \ref{ParameterizationCurvePsi}; finally, for $x$ ranging from $0$ to
$+\infty$ we obtain the part I (in red) on Fig. \ref{ParameterizationCurvePsi}.

\vskip .2cm
The only singular point of the (homogenized) curve $\psi=0$ in the complex domain is the triple point $(s=1,t=-1)$; it is the image of three points on the
complex $x$-plane, namely, the point $x=-1/2$ and two other points $x=(-1\pm i\sqrt{-7})/4$.

\vskip .5cm\noindent{\footnotesize
{\textbf{Acknowledgements.}
Part of the work of S. S. has been carried out in the framework
of the Labex Archim\`ede (ANR-11-LABX-0033) and of the A*MIDEX project (ANR-11-
IDEX-0001-02), funded by the Investissements d'Avenir French Government program
managed by the French National Research Agency (ANR). Part of the work of S. S. has
been carried out at IITP RAS. The support of Russian Foundation for Sciences (project
No. 14-50-00150) is gratefully acknowledged by S. S. The work of O. O. was supported by
the Program of Competitive Growth of Kazan Federal University and by the grant RFBR
17-01-00585.}}

\end{document}